\documentclass[12pt]{amsart}
\usepackage[T1]{fontenc}
\usepackage[utf8]{inputenc}
\usepackage{lmodern}
\usepackage{amsmath,amssymb,amsthm,mathtools}
\usepackage{microtype,needspace}
\usepackage[noadjust]{cite}
\usepackage[top=28mm,bottom=27mm,left=27mm,right=27mm]{geometry}
\usepackage[hidelinks]{hyperref}
\allowdisplaybreaks[2]

\numberwithin{equation}{section}
\newtheorem{theorem}{Theorem}[section]
\newtheorem{proposition}[theorem]{Proposition}
\newtheorem{lemma}[theorem]{Lemma}
\newtheorem{corollary}[theorem]{Corollary}
\theoremstyle{remark}
\newtheorem{remark}[theorem]{Remark}
\DeclareMathOperator{\trace}{trace}
\DeclareMathOperator{\diver}{div}
\DeclareMathOperator{\Scal}{Scal}
\newcommand{\R}{\mathbb R}

\newcommand{\SSS}{\mathbb S}
\newcommand{\HHvec}{\mathbf H}

\newcommand{\ip}[2]{\langle #1,#2\rangle}

\allowdisplaybreaks[3]

\hypersetup{pdftitle={Biharmonic hypersurfaces in space forms},pdfauthor={Yu Fu and Min-Chun Hong and Dan Yang}}

\title[Biharmonic hypersurfaces]{Biharmonic hypersurfaces in space forms}
\author{Yu Fu}
\address{Center of Applied Mathematics, Dongbei University of Finance and Economics, Dalian 116025, P. R. China}
\email{yufu@dufe.edu.cn}
\author{Min-Chun Hong}
\address{Department of Mathematics, The University of Queensland, Brisbane, QLD 4072, Australia}
\email{hong@maths.uq.edu.au}
\author{Dan Yang}
\address{School of Mathematics, Liaoning University, Shenyang 110036, P. R. China}
\email{dlutyangdan@126.com}
\subjclass[2020]{Primary 53C42, 58E20; Secondary 53C40, 34A12}
\keywords{Biharmonic maps, Chen's conjecture, Generalized Chen's conjecture, BMO conjecture, constant mean curvature}
\date{}

\begin{document}
\begin{abstract}
We prove that every biharmonic hypersurface in a space form of nonpositive sectional curvature is minimal in arbitrary dimension. This settles the hypersurface case of the generalized Chen's conjecture in hyperbolic space and gives a unified treatment of nonpositive space forms. For biharmonic hypersurfaces in the unit sphere, we derive quantitative restrictions on every possible nonconstant-mean-curvature solution. In particular, we obtain pointwise criteria forcing constant mean curvature, a strict scalar-curvature bound in dimensions at least five, and local classification results above the classical CMC gap threshold. These results provide further evidence for the BMO conjecture.
\end{abstract}
\maketitle

\section{Introduction}

Biharmonic maps generalize classical harmonic maps as critical points of the bienergy functional. Following Eells--Sampson~\cite{EellsSampson1966} and Eells--Lemaire~\cite{EellsLemaire1983}, the bienergy of a smooth map $\phi:(M^n,g)\to(N^k,\widetilde g)$ is defined by
\begin{align*}
 E_2(\phi)=\frac12\int_{M^n}|\tau(\phi)|^2\,dv_g,
\end{align*}
where $\tau(\phi)=\trace\nabla d\phi$ is the tension field. A map is biharmonic if it is a critical point of $E_2$ with respect to compactly supported variations. Jiang~\cite{Jiang1986} derived the corresponding Euler--Lagrange equation
\begin{align}
 \tau_2(\phi)=-\Delta\tau(\phi)
 -\trace R^N(d\phi,\tau(\phi))d\phi=0,\label{bitensioneq}
\end{align}
where $\Delta$ is the rough Laplacian on sections of the pullback bundle \(\varphi^{-1}TN\), and $R^N$ is the curvature tensor of $N^k$. An isometric immersion  $\phi:M^n\to N^k$ is called biharmonic if it is biharmonic as a map. Let $\HHvec$ denote the mean curvature vector. Since $\tau(\phi)=n\HHvec$, every minimal immersion is biharmonic. A biharmonic immersion is called {\em proper biharmonic} if it is not minimal.

For an isometric immersion into Euclidean space, the biharmonic equation \eqref{bitensioneq} reduces to $\Delta\HHvec=0$. Chen~\cite{Chen1991} and Jiang~\cite{Jiang1987} independently proved that every biharmonic surface in $\R^3$ is minimal. In 1991, Chen~\cite{Chen1991} subsequently formulated the following longstanding conjecture.

\medskip
\noindent\textbf{Chen's conjecture:} \textit{Every biharmonic submanifold in a Euclidean space is minimal.}
\medskip

Chen's  conjecture on   hypersurfaces   was proved by Hasanis--Vlachos~\cite{HasanisVlachos1995} in $\R^4$ and by Fu--Hong--Zhan~\cite{FHZ2021,FHZ2023} in $\R^5$  and $\R^6$  (see also~\cite{Chen2014,OuChen2020}).
In arbitrary dimension, some results under restrictions such as the number of distinct principal curvatures, convexity, properness, cohomogeneity, or holonomicity appear in~\cite{Dimitric1992,Fu2015,ChenMunteanu2013,Luo2014,AkutagawaMaeta2013,MontaldoOniciucRatto2016, KoisoUrakawa2018,BibiSoretVille2025}. Very recently, Chen's conjecture on  hypersurfaces in $\R^{n+1}$ was settled by Abraham in \cite{Abraham2026}.

In biharmonic map theory, the nonexistence and classification of biharmonic submanifolds in nonflat ambient spaces are of fundamental importance. Caddeo, Montaldo and Oniciuc \cite{CMO2001} in 2001 proposed the generalized Chen's conjecture: {\em{every biharmonic submanifold in a Riemannian manifold of nonpositive sectional curvature is minimal}}. Although counterexamples were constructed in negatively curved manifolds with variable sectional curvature in \cite{OuTang2012}, the generalized Chen's conjecture remains open in hyperbolic spaces. The study of biharmonic hypersurfaces in hyperbolic spaces is   an important problem in the theory of biharmonic submanifolds. The hypersurface case of the generalized Chen's conjecture in dimensions three, four, and five was established in~\cite{BMO2010,GuanLiVrancken2021,FHZ2023}, respectively.

\medskip
\noindent\textbf{Generalized Chen's conjecture in hyperbolic space:} \textit{Every biharmonic submanifold in a hyperbolic space is minimal.}
\medskip

 Let $N^{n+1}(c)$ be a space form of constant sectional curvature $c$ with $n\geq4$. Throughout the paper, hypersurfaces are assumed to be connected unless otherwise stated. The nonpositive curvature case will be treated for $c\le0$. In the positive curvature case, after the usual homothetic normalization, we take $c=1$ and write the ambient space as the unit sphere $\SSS^{n+1}$. We denote by $H$ the scalar mean curvature relative to a local unit normal and by $S=\trace A^2$ the squared norm of the second fundamental form.

 Our first result establishes the nonexistence of proper biharmonic hypersurfaces in nonpositive space forms in arbitrary dimension.

 \begin{theorem}\label{thm:nonpositive}
 	Every biharmonic hypersurface in a space form $N^{n+1}(c)$ with $c\le0$ is minimal.
 \end{theorem}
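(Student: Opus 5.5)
We argue by contradiction, using the standard decomposition of the biharmonic equation for a hypersurface $M^n\subset N^{n+1}(c)$. Splitting $\tau_2(\phi)=0$ into normal and tangential parts gives the well-known system
\begin{align*}
 \Delta H + H\,(S-nc)=0,\qquad A(\nabla H)+\frac{n}{2}H\nabla H=0,
\end{align*}
with sign conventions for $\Delta$ as in the paper. Suppose $H$ is not constant. Then on the open set $U=\{\nabla H\neq 0\}$ we may take $e_1=\nabla H/|\nabla H|$. It is a principal direction with principal curvature $\lambda_1=-\tfrac n2 H$.

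If $H$ is a nonzero constant on $M$, the normal equation gives $S=nc\le 0$. This is impossible when $H\neq0$, so $H\equiv0$. Hence it suffices to show that $U$ is empty. Once that is done, $H$ is locally constant, hence constant since $M$ is connected, and so $H=0$.

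The main step is a rigidity analysis on $U$ via Codazzi and Gauss equations in a principal frame. Completing $e_1$ to a local orthonormal principal frame $e_1,\dots,e_n$ with curvatures $\lambda_1,\dots,\lambda_n$, the tangential equation and Codazzi give $e_i(H)=0$ for $i\ge2$ and $\omega_{1i}^{\,j}$-type relations. These include $\nabla_{e_1}e_1=0$ and $e_1(\lambda_j)=(\lambda_1-\lambda_j)\omega_{j1}^{\,j}$. Along the integral curves of $e_1$ every quantity becomes a function of one variable. The normal equation then becomes an ODE $e_1e_1(H)-e_1(H)\sum_j\omega_{jj}^{\,1}=H(S-nc)$, with $\sum_j\lambda_j=nH-\lambda_1=\tfrac{3n}{2}H$.

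Rather than track each $\lambda_j$ separately (that approach is what fails in high dimension), I would work with the symmetric quantities $H$ and $S$, or the power sums $\sum_{j\ge2}\lambda_j^k$. The aim is an integral or maximum-principle identity along the $e_1$-curves. Concretely, I would differentiate the tangential equation along $e_1$. Using $e_1(\lambda_j)$, the Gauss equation for $R(e_1,e_j)e_1$ gives $e_1(\omega_{jj}^{\,1})=(\omega_{jj}^{\,1})^2+c+\lambda_1\lambda_j$ (a Riccati equation). Summing these produces a closed inequality for $f=\sum_j\omega_{jj}^{\,1}$, of the form $e_1(f)\ge \tfrac{1}{n-1}f^2+(n-1)c+\lambda_1\cdot\tfrac{3n}{2}H$. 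Combined with $c\le0$ and $\lambda_1\cdot\tfrac{3n}2H=-\tfrac{3n^2}{4}H^2<0$, I then want to compare with $H$ itself. The normal equation with $S\ge nH^2\ge nc$-type bounds shows that $H\,\Delta H\le 0$ in the appropriate sense, so $|H|$ is monotone and convex along the curves.

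The obstacle I expect is closing this ODE system without knowing the number of distinct principal curvatures. What I would try first is to introduce $u=|H|^{-\alpha}$ for a suitable exponent ($\alpha$ tuned, e.g.\ $\alpha$ related to $\tfrac{n}{2}$ or $\tfrac{3n}{2}$ from the trace identity). The goal is that the Riccati inequality for $f$ and the ODE for $H$ combine into a single inequality $u''\le -\kappa u$, or a blow-up inequality, along each maximal integral curve of $e_1$. Such an inequality forces $H$ either to blow up in finite arclength or to make $\nabla H$ vanish. The latter contradicts that the curve stays in $U$ unless $H$ is constant there. To pass from the local statement on $U$ to the global one, I would note that on the boundary of $U$ one has $\nabla H=0$, and use analyticity of biharmonic hypersurfaces (the system is elliptic with analytic coefficients). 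Vanishing of $\nabla H$ on an open set then forces it everywhere, giving the contradiction.
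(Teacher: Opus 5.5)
Your setup is correct and matches the paper's system (Proposition~\ref{prop:system}). In the paper's notation, $e_n\parallel\nabla H$, $\lambda=-nH/2$, $a_j=\omega^n_{jj}$, and your $f$ is $T=\sum_j a_j$. You get $\lambda_j'=(\lambda_j-\lambda)a_j$, the Riccati equations $a_j'=a_j^2+\lambda\lambda_j+c$, and the summed inequality $T'\ge T^2/(n-1)+(n-1)c-3\lambda^2$. The paper uses exactly this inequality in the last step of the case $c<0$.

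However, the decisive step is missing. The combination $u=|H|^{-\alpha}$ with $u''\le-\kappa u$ is only a hope, and it is doubtful that it closes. Along a curve the scalar equation reads $\lambda''=T\lambda'+\lambda(S-nc)$. The drift term $T\lambda'$ has no sign, and $S$ depends on the individual $\lambda_j$, not just on $H$ and $T$. In particular, ``$|H|$ is monotone and convex along the curves'' does not follow from $H\Delta H\le0$, since $(\lambda^2/2)''=\lambda'^2+T\lambda\lambda'+\lambda^2(S-nc)$ has an indefinite middle term. Monotonicity can be recovered only after introducing weights $\rho_j$ with $\rho_j'=-a_j\rho_j$, which give $(\lambda'\prod_j\rho_j)'=\lambda(S-nc)\prod_j\rho_j$.

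More fundamentally, the contradiction mechanism fails even if such an inequality held. No completeness is assumed. An integral curve of $e_n$ in $U\cap M_A$ may have finite length and simply leave $U$ (where $\nabla H\to0$), leave $M_A$, or leave $M$. So your alternatives, ``blow-up in finite arclength'' or ``$\nabla H$ vanishes,'' are exactly what is allowed to happen and give no contradiction. A Sturm-type inequality $u''\le-\kappa u$ is harmless on a curve shorter than $\pi/\sqrt\kappa$. Analyticity of the immersion does not help either, because the hypersurface itself cannot be extended.

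The idea you are missing is to detach the problem from the hypersurface:
\begin{itemize}
\item Substitute $\lambda=-\frac13\sum_j\lambda_j$ to get an autonomous polynomial system for $(\lambda_j,a_j)$.
\item Take its maximal solution, and carry the scalar equation to the whole maximal interval by the identity theorem (Lemma~\ref{lem:analytic}).
\item Argue purely at the ODE level. The conserved quantities $(a_j^2+\lambda_j^2+c)\rho_j^2$ and the monotone products $(\lambda-\lambda_j)\rho_j$ give Lemma~\ref{lem:bounded-factors}. It shows that, while the $\rho_j$ stay bounded, the growth forced by the scalar equation is incompatible with both finite-time breakdown and global existence.
\item For $c<0$, the shifted quotient $\bigl((\lambda-\lambda_j)\rho_j+1\bigr)/(\lambda-\sqrt{-c})$ bounds the $\rho_j$ once $\lambda>\sqrt{-c}$, which forces $\lambda\le\sqrt{-c}$.
\end{itemize}
Only then does your Riccati barrier for $T$ yield linear growth of $\lambda$ and the contradiction. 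For $c=0$, the bound $\lambda\le0$ already contradicts $\lambda>0$. Your final analyticity step is unnecessary: openness of $U$ and density of $M_A$ suffice.
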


 For $c<0$, this settles the hypersurface case of the generalized Chen's conjecture in hyperbolic space. When $c=0$, it provides an independent proof of the Euclidean case, which was recently established in~\cite{Abraham2026}. Our argument builds on the principal-curvature reduction developed in~\cite{FHZ2021,FHZ2023} and combines it with a continuation analysis on the maximal interval of existence of the resulting differential system. Rather than carrying out a dimension-dependent algebraic elimination, we regard the reduced principal-curvature equations as an autonomous polynomial dynamical system and exploit their maximal analytic continuation. A key feature of this argument is that the continuation step is independent of the dimension.

In contrast, the unit sphere admits many proper biharmonic hypersurfaces. The standard examples are the small hypersphere $\SSS^n(1/\sqrt2)$ and the products $\SSS^k(1/\sqrt2)\times\SSS^{n-k}(1/\sqrt2)$ with $1\le k\le n-1$ and $k\ne n/2$, see~\cite{CMO2001,CMO2002,BMO2008}. These examples have constant mean curvature and motivated Balmu\c{s}, Montaldo and Oniciuc~\cite{BMO2008} to formulate the following conjecture.

\medskip
\noindent\textbf{The BMO conjecture:} \textit{Every proper biharmonic hypersurface in $\SSS^{n+1}$ has constant mean curvature.}
\medskip

The BMO conjecture is known in several low-dimensional or geometrically restricted cases, see \cite{CMO2001,BMO2010,GuanLiVrancken2021,BMO2008,FuHong2018,LuoMaeta2017,MaetaOu2020,FetcuLoubeauOniciuc2021} for examples. The general arbitrary-dimensional case remains open. The estimates obtained below give pointwise restrictions on a possible non-CMC solution. Unique continuation for biharmonic maps was proved in~\cite{BrandingOniciuc2019}, and unique continuation for constant mean curvature (CMC) under an additional gradient inequality was studied in~\cite{BibiLoubeauOniciuc2021}. We refer the reader to the recent survey~\cite{FetcuOniciuc2022} for a comprehensive overview of related results.

For biharmonic hypersurfaces in the unit sphere $\SSS^{n+1}$, we will prove that every non-CMC solution satisfies
\[
 H^2<\frac{4(n-1)}{n(n+8)}
\]
on $M^n$. No completeness, compactness, constant scalar curvature, or a priori CMC assumption is imposed. The strict bound for $H^2$ gives the following pointwise criterion.

\begin{theorem}\label{cor:one-point}
Let $M^n$ be a biharmonic hypersurface in $\SSS^{n+1}$. If $H^2(p)\ge4(n-1)/[n(n+8)]$ at some point $p\in M^n$, then $M^n$ has constant mean curvature.
\end{theorem}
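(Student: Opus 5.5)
The proof reduces Theorem~\ref{cor:one-point} to the strict bound announced just before it: on any connected biharmonic hypersurface in $\SSS^{n+1}$ that is not CMC, $H^2<4(n-1)/[n(n+8)]$ holds at every point. Granting that bound, the corollary follows by contraposition, with one subtlety: the bound must really hold at \emph{every} point of $M^n$, including points of the closed set where $\nabla H=0$, not only on the open set $U=\{\nabla H\neq0\}$.

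For the underlying estimate I use the standard splitting of the biharmonic equation for hypersurfaces of $\SSS^{n+1}$ into normal and tangential parts:
\[
\Delta H=H(n-S),\qquad 2A(\nabla H)+\tfrac n2\nabla H^2=0 .
\]
On $U$, the vector $e_1=\nabla H/|\nabla H|$ is a principal direction with principal curvature $\lambda_1=-nH/2$. Following the principal-curvature reduction of~\cite{FHZ2021,FHZ2023} used for Theorem~\ref{thm:nonpositive}, I would derive along integral curves of $e_1$ the Codazzi relations for the connection coefficients $\omega_{ii}^1=e_1(\lambda_i)/(\lambda_1-\lambda_i)$. Combining them with the Gauss equation in the $(e_1,e_i)$ planes, where the curvature is $1+\lambda_1\lambda_i$, and with the normal equation above gives an autonomous polynomial system in $H$, $e_1(H)$ and the $\lambda_i$.

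The key step is then an inequality showing that the quantity $H^2$ cannot reach the threshold $4(n-1)/[n(n+8)]$ on $U$. The plan for this step:
\begin{itemize}
\item Combine the Gauss-equation identity for $e_1$ with Cauchy--Schwarz on $\sum_{i\ge2}\lambda_i=\tfrac{3n}{2}H$ and on the $\omega_{ii}^1$.
\item Aim for a differential inequality of Riccati type for $e_1(H)$ whose coefficient becomes nonnegative exactly when $n(n+8)H^2\ge4(n-1)$.
\item Run the maximal-continuation argument along the integral curve. Once $H^2$ reaches the threshold, the solution would have to blow up or force $e_1(H)=0$ in finite arclength. Blow-up is impossible because $M^n$ is smooth, so $\nabla H=0$, which contradicts the definition of $U$.
\end{itemize}
I expect this step to be the main obstacle: extracting the precise constant $n(n+8)$ requires sharp Cauchy--Schwarz bookkeeping on the $n-1$ remaining principal curvatures.

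Assuming the bound on $U$, the rest is topological. Let $H^2(p)\ge 4(n-1)/[n(n+8)]$ and suppose $H$ is not constant. Then $U\neq\emptyset$ and $H^2<4(n-1)/[n(n+8)]$ on $U$.
\begin{itemize}
\item If $p\in U$, this is an immediate contradiction.
\item If $p\notin U$, consider the interior $W$ of $M^n\setminus U$. On each component of $W$, $H$ is constant.
\item By analyticity, or by the unique continuation of~\cite{BrandingOniciuc2019} (biharmonic hypersurfaces in space forms are real analytic, being solutions of an elliptic system), $H$ would then be constant on all of $M^n$ if $W\neq\emptyset$. Hence $W=\emptyset$, so $U$ is dense.
\item By continuity, $H^2\le 4(n-1)/[n(n+8)]$ everywhere, with equality at $p$.
\end{itemize}
To upgrade this to strict inequality, I would show that the estimate on $U$ is uniform near boundary points: equality in the limit forces $e_1(H)\to0$ together with the Riccati equality case, which propagates to an open set and contradicts density of $U$. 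Therefore $H$ is constant, as claimed.
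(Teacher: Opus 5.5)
Your reduction to the strict bound on non-CMC hypersurfaces matches the paper's. You also correctly locate the threshold in the trace Cauchy--Schwarz inequality: $S\ge\frac{n+8}{n-1}\lambda^2$ with $\lambda=-nH/2$, and the resulting lower bound for $S-n$ is positive exactly when $H^2>4(n-1)/[n(n+8)]$. No bookkeeping on the $\omega^1_{ii}$ is needed for this. However, the step that carries all the content is missing, and the mechanism you sketch cannot produce a contradiction. You continue along the geometric integral curve of $e_1$ and argue that blow-up is impossible because $M^n$ is smooth, and that reaching $e_1(H)=0$ contradicts the definition of $U$. Neither holds:
\begin{itemize}
\item $M^n$ is not assumed complete, so the curve may leave $M^n$ in finite arclength.
\item The curve may reach $\partial U$, where $e_1=\nabla H/|\nabla H|$ is undefined, or a point where the principal multiplicities change. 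The coefficients $\omega^1_{ii}$ can blow up there.
\item Reaching $\nabla H=0$ only means the curve exits $U$; it is not a contradiction.
\end{itemize}
There is also no closed Riccati inequality for $e_1(H)$. The scalar equation is $\lambda''=T\lambda'+\lambda(S-n)$, where $T=\sum_i\omega^1_{ii}$ has no sign and no a priori bound. A system in $H$, $e_1(H)$, $\lambda_i$ does not close without the $a_i=\omega^1_{ii}$ and their equations $a_i'=a_i^2+\lambda\lambda_i+1$.

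The missing idea is to detach the ODE from the hypersurface. After substituting $\lambda=-\frac13\sum_i\lambda_i$, the equations for $(\lambda_i,a_i)$ form an autonomous polynomial system. Its maximal solution is real analytic, so the scalar equation propagates to the whole maximal interval (Lemma~\ref{lem:analytic}). A finite endpoint with bounded data then genuinely contradicts maximality. The term $T$ is controlled by integrating factors $\rho_i'=-a_i\rho_i$, which give:
\begin{itemize}
\item conserved quantities $(a_i^2+\lambda_i^2+1)\rho_i^2$;
\item $[(\lambda-\lambda_i)\rho_i]'=\lambda'\rho_i$;
\item $(\lambda'\prod_i\rho_i)'=\lambda(S-n)\prod_i\rho_i$.
\end{itemize}
Above the threshold with $\lambda>0$ and $\lambda'>0$, this keeps $\lambda'\prod_i\rho_i$ bounded below. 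Lemma~\ref{lem:bounded-factors} rules out such a solution, and an interior touching of the threshold is excluded by a Taylor expansion.

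Your upgrade to strict inequality at a point with $\nabla H=0$ is likewise not an argument; nothing makes an ``equality case'' propagate. The paper first extends J.-H. Chen's estimate $S\ge\frac{n^2(n+8)}{4(n-1)}H^2$ to all of $M^n$. Then $u=\frac{4(n-1)}{n(n+8)}-H^2\ge0$ satisfies $\diver\nabla u\le\frac{n^2(n+8)}{2(n-1)}H^2u$, and the strong maximum principle forces $u\equiv0$ once $u(p)=0$.

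Finally, density of $U$ is not needed. The Branding--Oniciuc theorem concerns biharmonic maps and harmonicity, not unique continuation of the CMC property; the latter is what Bibi--Loubeau--Oniciuc treat under extra hypotheses. The weak bound off $U$ follows elementarily. $H^2$ is constant on each component of the open set $\{H^2>4(n-1)/[n(n+8)]\}$, which misses $U$, and continuity at a boundary point of such a component gives a contradiction.
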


Bibi, Loubeau and Oniciuc~\cite[Theorem~3.10]{BibiLoubeauOniciuc2021} proved that a proper biharmonic hypersurface satisfying $H^2>4(n-1)/[n(n+8)]$ everywhere has constant mean curvature, provided that $H$ is constant on a nonempty open subset. Theorem~\ref{cor:one-point} replaces these hypotheses by a condition at a single point and includes equality.

We employ the Gauss equation to give a criterion in terms of the scalar curvature.

\begin{corollary}\label{cor:scalar-point}
Let $M^n$ be a biharmonic hypersurface in $\SSS^{n+1}$ {\rm ($n\ge5$)}. If $\Scal(p)\ge n(n^2+10n-20)/(n+8)$ at one point, then $M^n$ has constant mean curvature.
\end{corollary}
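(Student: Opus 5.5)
We derive the corollary from Theorem~\ref{cor:one-point} via the Gauss equation, combined with a pointwise upper bound on $S$ that holds on every biharmonic hypersurface in $\SSS^{n+1}$.

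For a hypersurface in $\SSS^{n+1}$, the Gauss equation gives
\[
\Scal = n(n-1) + n^2H^2 - S .
\]
Argue by contradiction and suppose $M^n$ does not have constant mean curvature. Then $H$ is nonconstant, so Theorem~\ref{cor:one-point} gives $H^2<4(n-1)/[n(n+8)]$ at every point, in particular at $p$.

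To use this we must control $S$ from below in terms of $H$. Two inputs are available. The elementary one is Cauchy--Schwarz, $S\ge nH^2$. This yields $\Scal\le n(n-1)+n(n-1)H^2$. The stronger input is the tangential and normal parts of the biharmonic equation for hypersurfaces in $\SSS^{n+1}$:
\[
\Delta H = H(S-n), \qquad A(\nabla H) = -\tfrac{n}{2}H\nabla H .
\]
On the open set where $\nabla H\neq0$, the second relation says that $-nH/2$ is a principal curvature. I expect the strict pointwise estimate in the paper behind Theorem~\ref{cor:one-point} to come from exactly this structure, and for the corollary we only need its consequence at $p$.

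Take $\lambda_1=-nH/2$ and the remaining principal curvatures summing to $nH+nH/2=3nH/2$. Cauchy--Schwarz on those $n-1$ values gives
\[
S\ge \frac{n^2H^2}{4}+\frac{9n^2H^2}{4(n-1)}=\frac{n^2(n+8)H^2}{4(n-1)}.
\]
Hence
\[
\Scal\le n(n-1)+n^2H^2-\frac{n^2(n+8)}{4(n-1)}H^2 = n(n-1)+\frac{n^2(3n-12)}{4(n-1)}H^2 .
\]
For $n\ge5$ the coefficient of $H^2$ is positive, so we may insert the strict bound on $H^2$:
\[
\Scal< n(n-1)+\frac{n^2(3n-12)}{4(n-1)}\cdot\frac{4(n-1)}{n(n+8)} = n(n-1)+\frac{n(3n-12)}{n+8}=\frac{n(n^2+10n-20)}{n+8}.
\]
This is exactly the threshold in the corollary, which confirms that this is the intended route, and it explains the restriction $n\ge5$ (for $n=4$ the coefficient vanishes). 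The conclusion contradicts $\Scal(p)\ge n(n^2+10n-20)/(n+8)$.

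The main obstacle is that the principal direction $-nH/2$ is only available where $\nabla H\neq0$, while $p$ may lie where $\nabla H(p)=0$. I will handle this as follows. Since $H$ is nonconstant, the set $U=\{\nabla H\neq0\}$ is nonempty and open. By unique continuation for biharmonic maps~\cite{BrandingOniciuc2019}, $H$ is not locally constant on any open set, so $U$ is dense. On $U$ the bound $\Scal<n(n^2+10n-20)/(n+8)$ holds strictly. By continuity the nonstrict bound $\Scal\le n(n^2+10n-20)/(n+8)$, equivalently the combined inequality above with $\le$, extends to all of $M^n$.

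To recover strictness at $p$, use the inequality $\Scal\le n(n-1)+\frac{n^2(3n-12)}{4(n-1)}H^2$, which holds everywhere by continuity. Combine it with the strict bound $H^2(p)<4(n-1)/[n(n+8)]$ from Theorem~\ref{cor:one-point}; this again gives strict inequality at $p$, and hence the contradiction. Therefore $M^n$ has constant mean curvature.
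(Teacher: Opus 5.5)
Your computation is the one the paper uses for Theorem~\ref{thm:sphere}: Chen's trace estimate $S\ge \frac{n^2(n+8)}{4(n-1)}H^2$, the Gauss equation, and the strict bound $H^2<4(n-1)/[n(n+8)]$. The gap is where you extend the trace estimate to all of $M^n$ by claiming that $U=\{\nabla H\neq0\}$ is dense.

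The unique continuation theorem of \cite{BrandingOniciuc2019} does not give this. It says that a biharmonic map which is harmonic on an open set is harmonic everywhere, and that two biharmonic maps agreeing on an open set coincide. That settles only the case $H\equiv0$ on some open set. It says nothing when $H$ equals a \emph{nonzero} constant on an open set. Unique continuation of the CMC property is not known in general; the paper notes it has been obtained only under extra hypotheses in \cite{BibiLoubeauOniciuc2021}. So as written, your argument gives nothing at a point $p$ in the interior of $\{\nabla H=0\}$, and the principal-direction argument for the trace estimate is unavailable there.

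The repair is short and needs no density. If $p\in\overline U$, your continuity argument works as written. If $p\notin\overline U$, then $H$ is constant on a neighborhood of $p$, and there are two cases.

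If $H(p)=0$, then $\Scal(p)=n(n-1)-S(p)\le n(n-1)$. This is below the threshold, because $\frac{n(n^2+10n-20)}{n+8}-n(n-1)=\frac{3n(n-4)}{n+8}>0$ for $n\ge5$.

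If $H(p)\ne0$, the normal equation $\Delta H+HS=nH$ gives $S=n$ near $p$. The strict bound then gives
\[
\Scal(p)=n(n-2)+n^2H(p)^2<n(n-2)+\frac{4n(n-1)}{n+8}=\frac{n(n^2+10n-20)}{n+8}.
\]

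Alternatively, follow the paper, which proves \eqref{eq:global-S-estimate} on all of $M^n$ by a connected-component argument. On each component of $\{S<\frac{n^2(n+8)}{4(n-1)}H^2\}$, $H$ is a nonzero constant, so $S=n$ and the defect is a fixed negative constant. This forces the component to be closed, hence equal to $M$, contradicting nonconstancy. With either patch your proof is complete and coincides with the paper's.
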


Combining Theorem~\ref{cor:one-point} with the constant mean curvature gap in~\cite{BO2012}, we obtain the following classification.

\begin{theorem}\label{thm:upper-classification}
Let $M^n$ be a biharmonic hypersurface in $\SSS^{n+1}$ {\rm ($n\ge4$)}. If $|H(p)|>(n-2)/n$ at one point $p$, then $M^n$ is locally congruent to an open part of $\SSS^n(1/\sqrt2)$. If $|H(p)|=(n-2)/n$ at one point $p$, then $M^n$ is locally congruent to an open part of
\begin{equation}\label{eq:Clifford-one}
 \SSS^1\!\left(\frac1{\sqrt2}\right)
 \times\SSS^{n-1}\!\left(\frac1{\sqrt2}\right)
 \subset\SSS^{n+1}.
\end{equation}
\end{theorem}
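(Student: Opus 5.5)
We plan to deduce Theorem~\ref{thm:upper-classification} from Theorem~\ref{cor:one-point} together with the known classification of proper biharmonic CMC hypersurfaces above the gap, as in~\cite{BO2012}.

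\emph{Step 1: reduction to CMC.} For $n\ge4$ we check that $(n-2)/n$ dominates the threshold of Theorem~\ref{cor:one-point}, that is,
\[
\frac{(n-2)^2}{n^2}\ \ge\ \frac{4(n-1)}{n(n+8)} .
\]
This is equivalent to $(n-2)^2(n+8)\ge 4n(n-1)$, i.e.\ $n^3+0\cdot n^2-28n+32\ge 4n^2-4n$, i.e.\ $n^3-4n^2-24n+32\ge0$. This fails for $n=4$ ($64-64-96+32<0$), so the naive inequality does not hold for small $n$. We must therefore argue differently at the boundary. The threshold of Theorem~\ref{cor:one-point} equals $3/12=1/4$ for $n=4$, while $((n-2)/n)^2=1/4$; equality suffices, since Theorem~\ref{cor:one-point} includes equality. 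For $n=5$ the threshold is $16/65$ against $9/25$, and in general $(n-2)^2(n+8)-4n(n-1)=n^3-28n+32-4n^2+4n$ should be recomputed carefully. The correct expansion is $(n^2-4n+4)(n+8)=n^3+4n^2-28n+32$, so the difference equals $n^3-32n+... $; more precisely $n^3+4n^2-28n+32-4n^2+4n=n^3-24n+32$, which vanishes at $n=4$ and is increasing for $n\ge4$, hence nonnegative. Consequently $|H(p)|\ge(n-2)/n$ gives $H^2(p)\ge 4(n-1)/[n(n+8)]$, and Theorem~\ref{cor:one-point} shows that $H$ is constant on the connected $M^n$, with $|H|\ge(n-2)/n>0$. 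Thus $M^n$ is proper biharmonic and CMC.

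\emph{Step 2: CMC structure.} For a proper biharmonic CMC hypersurface in $\SSS^{n+1}$, the normal component of the biharmonic equation gives $S=n$ (with $\Delta H=0$ and $AH\nabla H=0$ trivially). Since the mean curvature is constant and $S=n$, the squared norm of the traceless part is $|\Phi|^2=n-nH^2$. We then invoke the gap result of~\cite{BO2012} for CMC proper biharmonic hypersurfaces. Because it is a pointwise/local statement (via Okumura's inequality applied to $\Phi$ with $\operatorname{tr}A^3$), it should apply without any completeness assumption. Okumura's inequality
\[
|\operatorname{tr}\Phi^3|\le\frac{n-2}{\sqrt{n(n-1)}}|\Phi|^3
\]
combined with the biharmonic/Simons-type identity shows the following. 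If $|H|>(n-2)/n$, then $|\Phi|\equiv0$, so $M^n$ is totally umbilical with $S=n$, $H^2=1$, hence an open part of $\SSS^n(1/\sqrt2)$. If $|H|=(n-2)/n$, then equality holds in Okumura, which forces two distinct principal curvatures with multiplicities $1$ and $n-1$. The resulting isoparametric structure, with $S=n$, identifies the hypersurface locally as $\SSS^1(1/\sqrt2)\times\SSS^{n-1}(1/\sqrt2)$.

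\emph{Main obstacle.} The main obstacle is ensuring that the cited CMC gap theorem is genuinely local, i.e.\ that it does not use compactness. If it does (for example, if integrating Simons' formula is required), we would replace that step with a pointwise argument. The biharmonic CMC condition $S=n$ makes $S$ constant, so the CMC hypersurface has constant scalar curvature. Okumura then bounds $\operatorname{tr}A^3$, and from Simons' identity $\tfrac12\Delta|\Phi|^2=|\nabla A|^2+(\dots)$ with $|\Phi|^2$ constant we get $0\ge|\nabla A|^2+|\Phi|^2 P(|\Phi|)$ with $P\ge0$ in the stated range. This yields $\nabla A=0$ and the local classification of hypersurfaces with parallel second fundamental form (Lawson).
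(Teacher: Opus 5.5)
Your proposal is correct and matches the paper's proof. The numerator $n^3-24n+32=(n-4)(n^2+4n-8)$ you eventually obtain is exactly the paper's factorization showing $((n-2)/n)^2\ge 4(n-1)/[n(n+8)]$ for $n\ge4$, so the earlier miscalculation claiming failure at $n=4$ should simply be deleted. After that, Theorem~\ref{cor:one-point} gives nonzero constant $H$, hence $S=n$, and the Balmu\c{s}--Oniciuc gap~\eqref{eq:CMC-gap} with its equality cases finishes. Your Simons--Okumura sketch also correctly explains why that gap is pointwise and needs no completeness: with $H$ constant and $S=n$ the identity becomes $0=|\nabla A|^2+2nH^2|\Phi|^2+nH\trace\Phi^3$, and Okumura's inequality bounds the right-hand side below by a nonnegative quantity exactly when $|H|\ge(n-2)/n$.
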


The gap value $(n-2)/n$ in Theorem~\ref{thm:upper-classification} is the one obtained in~\cite{BO2012} for proper biharmonic CMC hypersurfaces. The point here is that constant mean curvature is not assumed: the one-point hypothesis first forces the CMC property by Theorem~\ref{cor:one-point}, after which the gap theorem applies. Thus Theorem~\ref{thm:upper-classification} removes the a priori CMC assumption in this range, while retaining the sharp endpoint and its equality case from~\cite{BO2012}.

We briefly describe the argument. The starting point is the overdetermined differential system~\eqref{eq:system}, obtained from the Gauss and Codazzi equations and developed in~\cite{FuHong2018,FHZ2021,FHZ2023}. The reduction uses recurrence relations for the power sums and successive elimination of the connection coefficients to obtain polynomial restrictions on the distinguished principal curvature. We retain this framework and the notation $F_k,H_k,G_{l,m}$. The resulting differential system is then studied on its maximal interval of existence, avoiding a separate dimension-dependent elimination.

Write $U=\{p\in M:\nabla H(p)\ne0\}$. On $U$, the tangential biharmonic equation identifies $\nabla H$ as a principal direction with principal curvature $\lambda=-nH/2$. Working locally where the principal curvature multiplicities are constant, one obtains system~\eqref{eq:system},
whose trace relation eliminates $\lambda$ from the first-order equations, leaving a polynomial autonomous system for the $2n-2$ functions $\lambda_i,a_i$. The scalar biharmonic equation is an additional condition, expressed by the polynomial $\mathcal P$ in~\eqref{eq:P-definition}. Since the right-hand side of the autonomous system is polynomial, its solutions are real analytic on every maximal interval of existence. Hence the scalar constraint extends from the initial open interval to the whole maximal interval by analyticity. Thus the question raised in~\cite[Remark~3.4]{FHZ2023} becomes a nonexistence problem for solutions with nonconstant $\lambda$.

The continuation argument is applied directly to the curvature-dependent principal-curvature system. The key estimate is Lemma~\ref{lem:bounded-factors}. Positive solutions of $\rho_i'=-a_i\rho_i$ give the conserved quantities~\eqref{eq:pair-invariant} and the differentiated products~\eqref{eq:product-monotonicity}. The trace relation provides an index for which the corresponding product is bounded along a sequence approaching the endpoint, and monotonicity extends this bound to the whole forward interval. Logarithmic estimates then control all but one pair of variables; the trace relation and one further integration control $\lambda$ and the remaining pair. These bounds allow us to apply the ODE continuation theorem at a finite endpoint. The normalization also allows the conserved quantity in~\eqref{eq:pair-invariant} to be nonpositive when $c<0$.

For $c=0$ or $c<0$, the scalar equation forces positive growth after a choice of signs and parameter direction. The continuation estimate excludes a finite endpoint, and a shifted quotient involving $\lambda-\sqrt{-c}$ reduces a possible nonconstant solution to the bounded range $0<\lambda\le\sqrt{-c}$. For $c=0$ this is already impossible. If $c<0$, the summed Riccati equation for $T=F_1$ gives a uniform lower bound for $T$, and the scalar equation then forces $\lambda$ to grow linearly, contradicting the preceding bound. Consequently, the differential system admits no nonconstant solution for $\lambda$ in nonpositive curvature. Returning to the original biharmonic equations yields Theorem~\ref{thm:nonpositive}.

For $c=1$, the term $S-n$ need not have a fixed sign. The trace inequality~\eqref{eq:trace-Cauchy} nevertheless identifies a range in which the continuation argument applies. Proposition~\ref{prop:positive-ODE} gives a strict bound for the distinguished principal curvature along every nonconstant analytic solution. Returning to the hypersurface, this yields a weak global bound for $H^2$. J.-H. Chen's estimate extends to the whole non-CMC hypersurface by a connected-component argument. The strong maximum principle then upgrades the bound for $H^2$ to a strict one. Theorem~\ref{cor:one-point} and Corollary~\ref{cor:scalar-point} follow immediately, while Theorem~\ref{thm:upper-classification} is obtained by combining the one-point CMC criterion with the gap theorem in~\cite{BO2012}.

The principal-curvature reduction developed in~\cite{FuHong2018,FHZ2021,FHZ2023} is combined here with a continuation analysis of the autonomous ODE system arising from the biharmonic equations. This gives a dimension-independent treatment of the Euclidean and hyperbolic cases and, in the spherical case, quantitative rigidity estimates for every possible non-CMC solution.\\

The paper is organized as follows. In Section 2, we recall the fundamental equations for biharmonic hypersurfaces in space forms. Section 3 is devoted to the continuation estimates for the resulting ordinary differential system. Using positive integrating factors and the trace relation, we obtain bounds for all components of the system under the growth conditions required in the proof. In Section 4, we apply these estimates to exclude nonconstant solutions for the distinguished principal curvature in the Euclidean and hyperbolic cases, and complete the proof of Theorem~\ref{thm:nonpositive}. Section 5 deals with biharmonic hypersurfaces in the unit sphere. We establish quantitative estimates for possible non-CMC solutions, derive pointwise criteria for constant mean curvature, and study the associated rigidity and equality cases.

\section{Preliminaries and the principal curvature equations}\label{sec:geometry}

Let $\phi:M^n\to N^{n+1}(c)$ be an isometric immersion. Denote by $\nabla$ and $\widetilde\nabla$ the Levi-Civita connections of $M^n$ and $N^{n+1}(c)$, respectively. For a local unit normal vector field $\xi$, the Gauss and Weingarten formulas are stated as
\begin{align*}
 \widetilde\nabla_XY&=\nabla_XY+h(X,Y),\\
 \widetilde\nabla_X\xi&=-AX,
\end{align*}
where $h(X,Y)=\ip{AX}{Y}\xi$. Thus $\HHvec=H\xi$ and $nH=\trace A$. With the convention
\[
 R(X,Y)Z=\nabla_X\nabla_YZ-\nabla_Y\nabla_XZ-\nabla_{[X,Y]}Z,
\]
the Gauss and Codazzi equations take the form
\[
\begin{aligned}
 R(X,Y)Z
 &=c\bigl(\ip{Y}{Z}X-\ip{X}{Z}Y\bigr)\\
 &\quad+\ip{AY}{Z}AX-\ip{AX}{Z}AY,\\
 (\nabla_XA)Y&=(\nabla_YA)X.
\end{aligned}
\]
We use the Laplacian $\Delta=-\diver\nabla$ on functions defined on $M^n$. With respect to a local orthonormal frame, its action on $f$ is given by
\[
 \Delta f=-\sum_{i=1}^n\bigl(e_ie_i-\nabla_{e_i}e_i\bigr)f.
\]

The normal and tangential parts of the biharmonic equation yield the following standard characterization (see~\cite{CMO2002,OuChen2020,FHZ2023}).

\begin{proposition}\label{prop:biharmonic}
The hypersurface $\phi:M^n\to N^{n+1}(c)$ is biharmonic if and only if
\begin{align}
 &\Delta H+HS=ncH,\label{eq:biharmonic-normal}\\
 &2A\nabla H+nH\nabla H=0,\label{eq:biharmonic-tangent}
\end{align}
where $S=\trace A^2$.
\end{proposition}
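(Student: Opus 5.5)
We derive the two equations by decomposing the bitension field $\tau_2(\phi)$ into its normal and tangential components. The isometric immersion has $\tau(\phi)=n\HHvec=nH\xi$. In the space form, $R^N(X,Y)Z=c(\ip{Y}{Z}X-\ip{X}{Z}Y)$. Hence
\[
 \trace R^N(d\phi,\tau(\phi))d\phi=\sum_i R^N(e_i,nH\xi)e_i=-nc\,nH\xi,
\]
since $\ip{\xi}{e_i}=0$ and $\ip{e_i}{e_i}=1$. The curvature term therefore contributes $n^2cH\xi$ to $-\trace R^N(\cdot)$, with the sign fixed by the convention in~\eqref{bitensioneq}. I will track this sign carefully against the convention $\Delta=-\trace\nabla^2$ on the pullback bundle, which has the same sign as the function Laplacian $\Delta=-\diver\nabla$.

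The main step is computing the rough Laplacian of $H\xi$. Fix $p$ and a geodesic frame $e_i$ at $p$. Using the Weingarten formula $\widetilde\nabla_{e_i}\xi=-Ae_i$, we get
\[
 \widetilde\nabla_{e_i}(H\xi)=(e_iH)\xi-HAe_i.
\]
Differentiating once more and applying the Gauss formula gives
\[
 \widetilde\nabla_{e_i}\widetilde\nabla_{e_i}(H\xi)=(e_ie_iH)\xi-2(e_iH)Ae_i-H\nabla_{e_i}(Ae_i)-H\ip{Ae_i}{Ae_i}\xi.
\]
Summing over $i$, the normal part is $(-\Delta H-HS)\xi$. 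The tangential part is $-2A\nabla H-H\sum_i(\nabla_{e_i}A)e_i$.

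To simplify the last term, the Codazzi equation gives $\sum_i(\nabla_{e_i}A)e_i=\nabla(\trace A)=n\nabla H$. Indeed, $\ip{(\nabla_{e_i}A)e_i}{X}=\ip{(\nabla_XA)e_i}{e_i}$, and summing yields $X(\trace A)$. The tangential part is therefore $-2A\nabla H-nH\nabla H$.

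Putting these together,
\[
 \tau_2(\phi)=n\bigl[(-\Delta H-HS+ncH)\xi-(2A\nabla H+nH\nabla H)\bigr],
\]
up to an overall sign. The $\Delta H$ term and the curvature term have opposite relative signs, as required by~\eqref{eq:biharmonic-normal}. Setting the normal and tangential components to zero gives~\eqref{eq:biharmonic-normal} and~\eqref{eq:biharmonic-tangent}. Conversely, these two equations make $\tau_2(\phi)=0$, since the decomposition is orthogonal.

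The only real obstacle is bookkeeping the sign conventions so that the normal equation reads $\Delta H+HS=ncH$, with $\Delta$ the positive Laplacian. This means checking that $-\trace\nabla^2$ acting on $H\xi$ produces $+\Delta H$ in the normal direction, and that the curvature term enters with the opposite sign. A sanity check is the case $c=1$ with the hypersphere $\SSS^n(1/\sqrt2)$: there $H=1$ and $S=2n$, and the normal equation then requires $S=n$. That contradicts $S=2n$, so the normal sign must be recomputed. Because this is the standard characterization, I would match the result against~\cite{CMO2002,OuChen2020}.
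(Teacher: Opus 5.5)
Your derivation is correct, and it is the standard one. The paper gives no proof of its own and only cites the references listed before the proposition. You split $\tau_2(\phi)=-\Delta\tau-\trace R^N(d\phi,\tau)d\phi$ into normal and tangential parts using the Gauss and Weingarten formulas in a geodesic frame. You then use Codazzi to get $\sum_i(\nabla_{e_i}A)e_i=n\nabla H$.

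Jiang's formula holds with $R^N(X,Y)=[\widetilde\nabla_X,\widetilde\nabla_Y]-\widetilde\nabla_{[X,Y]}$, which gives $R^N(X,Y)Z=c(\ip{Y}{Z}X-\ip{X}{Z}Y)$, and with the positive rough Laplacian. Under these conventions your identity $\tau_2(\phi)=n\bigl[(-\Delta H-HS+ncH)\xi-(2A\nabla H+nH\nabla H)\bigr]$ holds exactly. The hedge ``up to an overall sign'' is unnecessary.

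The one real error is your closing sanity check, and it should be removed. As written, it has the proposal claim that its own correct normal equation fails on a known example.

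\begin{itemize}
\item The hypersphere $\SSS^n(1/\sqrt2)\subset\SSS^{n+1}$ is totally umbilical with shape operator $\pm\mathrm{Id}$.
\item The principal curvature of $\SSS^n(r)$ relative to $\SSS^{n+1}$ is $\sqrt{1-r^2}/r$, which equals $1$ at $r=1/\sqrt2$. The value $1/r=\sqrt2$ is its curvature as a sphere in $\R^{n+2}$, not in $\SSS^{n+1}$.
\item Hence $H^2=1$ and $S=nH^2=n$, not $2n$. Indeed, the pair $H=1$, $S=2n$ is already inconsistent with umbilicity.
\end{itemize}

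With $c=1$ and constant $H\neq0$, \eqref{eq:biharmonic-normal} reduces to $S=n$, which this example satisfies. So the check confirms the relative sign of $HS$ and $ncH$ that you derived: the opposite sign would force $S=-n$.
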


We define the open set $U=\{p\in M:\nabla H(p)\ne0\}$, and let $M_A$ be the open dense subset of $M^n$ on which the number of distinct principal curvatures is locally constant. The definition of $U$ is independent of the choice of local unit normal vector field. The principal curvatures and their eigendistributions are smooth on $M_A$.

Suppose that $H$ is nonconstant. Then $U$ is nonempty. All principal-frame calculations below are carried out locally on $U\cap M_A$; throughout the paper, however, the notation $U$ always denotes the full set $\{p\in M:\nabla H(p)\ne0\}$. Equation~\eqref{eq:biharmonic-tangent} shows that $\nabla H$ is a principal direction with principal curvature $-nH/2$. Around each point of $U\cap M_A$, choose an orthonormal principal frame $\{e_1,\ldots,e_n\}$ with $e_n$ parallel to $\nabla H$, and write $Ae_i=\lambda_i e_i$ and $\lambda=\lambda_n=-nH/2$. Then $e_i(\lambda)=0$ for $1\leq i<n$, whereas $e_n(\lambda)\ne0$.

Define the connection coefficients by $\nabla_{e_i}e_j=\sum_k\omega^k_{ij}e_k$. Put $a_i=\omega^n_{ii}$ for $1\le i\le n-1$. For a smooth function $f$, a prime denotes $e_n(f)$. Let $\gamma:I\to U\cap M_A$ be an integral curve of $e_n$, parametrized by $t$, so that
\[
\gamma'(t)=e_n|_{\gamma(t)}.
\]
For simplicity, we use the same notation for the restrictions of $\lambda$, $\lambda_i$ and $a_i$ to $\gamma$. Then
\[
\lambda'=\frac{\mathrm{d}\lambda}{\mathrm{d}t}=e_n(\lambda),\qquad
\lambda_i'=\frac{\mathrm{d}\lambda_i}{\mathrm{d}t}=e_n(\lambda_i),\qquad
a_i'=\frac{\mathrm{d}a_i}{\mathrm{d}t}=e_n(a_i).
\]

 We use the power sums introduced in~\cite{FHZ2023}:
\[
\begin{aligned}
 F_k=\sum_{i=1}^{n-1}a_i^k,\quad H_k=\sum_{i=1}^{n-1}\lambda_i^k,\quad
 G_{l,m}=\sum_{i=1}^{n-1}\lambda_i^l a_i^m.
\end{aligned}
\]
We set $T=F_1$, $F_0=H_0=n-1$, $G_{0,m}=F_m$, and $G_{l,0}=H_l$. The notation $H_k$ denotes a power sum of the transverse principal curvatures.

\begin{lemma}[{\cite{FHZ2023}}]\label{lem:frame}
The principal curvature $\lambda$ is simple on $U\cap M_A$, that is, $\lambda_i\ne\lambda$ for $1\le i\le n-1$.
\end{lemma}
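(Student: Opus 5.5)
The approach is a contradiction argument that combines the Codazzi equations in the principal frame with the defining property of $e_n$. Fix a point of $U\cap M_A$ and the principal frame $\{e_1,\ldots,e_n\}$ with $e_n=\nabla H/|\nabla H|$ and $\lambda=\lambda_n=-nH/2$. Suppose that $\lambda_j=\lambda$ for some $j<n$ at this point. On $M_A$ the multiplicities are locally constant, so this equality then holds on a neighborhood, and $e_j$ and $e_n$ lie in the same eigendistribution $D_\lambda$ there.

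The key step is to apply the Codazzi equation to the pair $(e_n,e_j)$ and take the component along $e_j$. First expand $(\nabla_{e_n}A)e_j-(\nabla_{e_j}A)e_n$ using $Ae_k=\lambda_ke_k$. The $e_j$-component of $(\nabla_{e_n}A)e_j$ is $e_n(\lambda_j)$. The $e_j$-component of $(\nabla_{e_j}A)e_n$ is $e_j(\lambda_n)+(\lambda_n-\lambda_j)\omega^j_{jn}$. Since $\lambda_j=\lambda_n$ on the neighborhood, the connection terms vanish, and Codazzi gives
\[
e_n(\lambda_j)=e_j(\lambda_n)\cdot 0+e_j(\lambda)\delta_{jj}\quad\text{i.e.}\quad e_n(\lambda_j)\ \text{is tied to}\ e_j(\lambda).
\]
The cleaner route is the $e_n$-component of the same pair. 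Codazzi for $(e_j,e_n)$ taken against $e_n$ gives $e_j(\lambda_n)=(\lambda_j-\lambda_n)\omega^n_{nj}$ in general, which is consistent but says nothing new. Against $e_j$ it gives $e_n(\lambda_j)=(\lambda_n-\lambda_j)\omega^j_{jn}$. With $\lambda_j\equiv\lambda$ on the open set, this yields $e_n(\lambda)=e_n(\lambda_j)=0$. That contradicts $e_n(\lambda)\neq0$, which holds because $\nabla H\neq0$ and $e_n\parallel\nabla H$.

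I expect the main obstacle to be justifying the identification $\lambda_j\equiv\lambda$ as functions on a neighborhood, so that derivatives of $\lambda_j$ equal those of $\lambda$. This follows from local constancy of multiplicities on $M_A$: the eigenvalue function of a multiplicity-constant eigendistribution is smooth, and $e_n,e_j$ both lie in it. The frame can be chosen inside $D_\lambda$ with $e_n$ fixed, because $e_n$ is an eigenvector by \eqref{eq:biharmonic-tangent}. An alternative check is the standard fact that on an eigendistribution of dimension at least $2$, Codazzi forces the eigenvalue to be constant along that distribution. Applied to $D_\lambda\ni e_n$, this gives $e_n(\lambda)=0$ directly, the same contradiction. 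Hence no $\lambda_i$ equals $\lambda$, so $\lambda$ is simple on $U\cap M_A$.
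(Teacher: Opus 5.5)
Your argument is correct and follows the standard proof behind the cited lemma from \cite{FHZ2023}. Codazzi for the pair $(e_n,e_j)$ taken against $e_j$ gives $e_n(\lambda_j)=(\lambda_n-\lambda_j)\omega^j_{jn}$, so $\lambda_j\equiv\lambda$ near the point (which local constancy of multiplicities on $M_A$ guarantees) forces $e_n(\lambda)=0$, contradicting $e_n(\lambda)\neq0$ on $U$. You should delete your first expansion and the garbled display that follows it: the $e_j$-component of $(\nabla_{e_j}A)e_n$ is just $(\lambda_n-\lambda_j)\omega^j_{jn}$, because the $e_j(\lambda_n)$ term points along $e_n$.
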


The Gauss and Codazzi equations, together with~\eqref{eq:biharmonic-normal}--\eqref{eq:biharmonic-tangent}, give the following system. We use the formulation developed in \cite[Proposition~3.3]{FHZ2023}.

\begin{proposition}[{\cite{FHZ2023}}]\label{prop:system}
Locally on $U\cap M_A$, the principal curvatures and connection coefficients satisfy
\begin{equation}\label{eq:system}
	\left\{
	\begin{aligned}
		\lambda''
		&=T\lambda'+\lambda(S-nc),\\
		\lambda_i'
		&=(\lambda_i-\lambda)a_i,
		\qquad\quad 1\le i\le n-1,\\
		a_i'
		&=a_i^2+\lambda\lambda_i+c,
		\qquad 1\le i\le n-1,
	\end{aligned}
	\right.
\end{equation}
where
\begin{equation}\label{eq:trace-def}
 \lambda=-\frac13\sum_{i=1}^{n-1}\lambda_i,\qquad
 T=\sum_{i=1}^{n-1}a_i,\qquad
 S=\lambda^2+\sum_{i=1}^{n-1}\lambda_i^2.
\end{equation}
\end{proposition}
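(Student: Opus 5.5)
The plan is to derive \eqref{eq:system} by writing the Codazzi and Gauss equations in the principal frame $\{e_1,\dots,e_n\}$ on $U\cap M_A$, with $e_n=\nabla H/|\nabla H|$. The inputs are Lemma~\ref{lem:frame}, the facts $e_i(\lambda)=0$ for $i<n$ and $e_n(\lambda)\ne0$, and the normal equation \eqref{eq:biharmonic-normal}.

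\emph{The trace relation.} This is immediate. From $nH=\lambda+\sum_{i<n}\lambda_i$ and $\lambda=-nH/2$ we get $-2\lambda=\lambda+\sum_{i<n}\lambda_i$, hence $\lambda=-\frac13\sum_{i<n}\lambda_i$. The expressions for $T$ and $S$ in \eqref{eq:trace-def} are definitions.

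\emph{Codazzi identities.} In the principal frame the Codazzi equation reads, componentwise,
\[
e_i(\lambda_j)\delta_{jk}+(\lambda_j-\lambda_k)\omega^k_{ij}=e_j(\lambda_i)\delta_{ik}+(\lambda_i-\lambda_k)\omega^k_{ji}.
\]
I will extract three consequences from it.
\begin{itemize}
\item Take $(i,j,k)=(n,i,i)$ with $i<n$. Using $\omega^i_{in}=-\omega^n_{ii}=-a_i$, this gives $e_n(\lambda_i)=(\lambda_i-\lambda)a_i$. Restricted to an integral curve $\gamma$ of $e_n$, this is the second line of \eqref{eq:system}.
\item Take $(i,j,k)=(i,n,n)$. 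This gives $e_i(\lambda)=(\lambda-\lambda_i)\omega^i_{nn}$. Since $e_i(\lambda)=0$ and $\lambda$ is simple by Lemma~\ref{lem:frame}, we get $\omega^i_{nn}=0$. So $\nabla_{e_n}e_n=0$, and the curves $\gamma$ are geodesics.
\item Take $(i,j,k)=(i,j,n)$ with $i\ne j<n$. This gives $(\lambda_j-\lambda)\omega^n_{ij}=(\lambda_i-\lambda)\omega^n_{ji}$. Combined with $[e_i,e_j]\lambda=0$, which says $\omega^n_{ij}=\omega^n_{ji}$ because $e_n(\lambda)\ne0$, this should force $\omega^n_{ij}=0$ when $\lambda_i\ne\lambda_j$. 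When $\lambda_i=\lambda_j$, I would instead rotate the frame within the common eigenspace to diagonalize the symmetric matrix $(\omega^n_{ij})$.
\end{itemize}

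\emph{Gauss equation.} I then compute $\ip{R(e_i,e_n)e_n}{e_i}$ from the connection coefficients. Because $\nabla_{e_n}e_n=0$ and $\nabla_{e_i}e_n=-\sum_j\omega^n_{ij}e_j=-a_ie_i$ after the diagonalization, this reduces to $-e_n(a_i)+a_i^2$. The Gauss equation says the same quantity equals $-(c+\lambda\lambda_i)$, which gives $a_i'=a_i^2+\lambda\lambda_i+c$.

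\emph{Normal equation.} Here $\nabla_{e_n}e_n=0$ and $(\nabla_{e_i}e_i)H=\omega^n_{ii}e_n(H)$, so $\Delta H=-H''+TH'$. Substituting $H=-2\lambda/n$ into \eqref{eq:biharmonic-normal} and multiplying through by $-n/2$ yields $\lambda''-T\lambda'-\lambda S=-nc\lambda$, which is the first line of \eqref{eq:system}.

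The main obstacle is the vanishing of the off-diagonal coefficients $\omega^n_{ij}$, $i\ne j$, inside eigenspaces of multiplicity greater than one. The Gauss computation needs these to vanish, otherwise extra terms $\sum_j(\omega^n_{ij})^2$ appear. I would handle this by diagonalizing the symmetric matrix $(\omega^n_{ij})$ on each eigenspace, which is possible on the open dense set where its eigenvalue multiplicities are locally constant; I have not checked that this is compatible with keeping the frame smooth. Failing that, I would follow the reduction in \cite[Proposition~3.3]{FHZ2023}.
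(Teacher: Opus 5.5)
Your route is the standard reduction behind \cite[Proposition~3.3]{FHZ2023}, which the paper cites rather than reproves: Codazzi in a principal frame, then the Gauss equation for the plane spanned by $e_i,e_n$, then the normal equation along the $e_n$-geodesics. The trace relation, $e_n(\lambda_i)=(\lambda_i-\lambda)a_i$, $\nabla_{e_n}e_n=0$, $\Delta H=-H''+TH'$ and the resulting scalar equation are all correct. The genuine gap is the step you flag yourself: the vanishing of $\omega^n_{ij}$ for $i\ne j<n$ when $\lambda_i=\lambda_j$. Without it the Gauss computation picks up off-diagonal terms, so it is essential. Your fallback does not close it as written. You have not checked that the diagonalizing frame can be chosen smoothly. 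Even granting that, you would obtain \eqref{eq:system} only on a further open dense subset of $U\cap M_A$ and for a specially chosen frame. That would suffice for the paper's later applications, but it is weaker than the stated claim on $U\cap M_A$.

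The missing idea is a Codazzi component you did not use. Take the $e_j$-component of $(\nabla_{e_i}A)e_n=(\nabla_{e_n}A)e_i$ with $i\ne j<n$; this is the triple $(i,n,j)$ in your componentwise identity. It gives
\[
 (\lambda-\lambda_j)\,\omega^j_{in}=(\lambda_i-\lambda_j)\,\omega^j_{ni}.
\]
If $\lambda_i=\lambda_j$, the right-hand side vanishes. Since $\lambda_j\ne\lambda$ by Lemma~\ref{lem:frame}, you get $\omega^n_{ij}=-\omega^j_{in}=0$.

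Invariantly, for $X,Y$ in the eigendistribution of an eigenvalue $\mu\ne\lambda$, the same identity reads $(\mu-\lambda)\ip{\nabla_XY}{e_n}=e_n(\mu)\ip{X}{Y}$. So the matrix you proposed to diagonalize is already a scalar multiple of the identity on each eigenspace. Combined with your symmetry argument for $\lambda_i\ne\lambda_j$, this gives $\omega^n_{ij}=0$ for all $i\ne j<n$. It holds in every local principal frame and at every point of $U\cap M_A$, with no re-choice of frame and no passage to a smaller dense set. The rest of your derivation then goes through unchanged.
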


The geometric reduction is used locally on $U\cap M_A$. Once $H$ is constant, we return to the normal equation~\eqref{eq:biharmonic-normal}, which gives $H(S-nc)=0$.

The power-sum relations needed below were derived in~\cite[Lemmas~4.1--4.2]{FHZ2023}:
\begin{align}
 T'&=F_2-3\lambda^2+(n-1)c,\label{eq:T-derivative}\\
 G_{1,1}&=\lambda T-3\lambda',\label{eq:G11}\\
 G_{1,1}'&=2G_{1,2}-\lambda F_2+\lambda H_2+cH_1,
 \label{eq:G-recursion}
\end{align}
where $H_1=-3\lambda$ and $H_2=S-\lambda^2$.

We also use the centered Cauchy--Schwarz inequality
\begin{align}
 &\bigl((n-1)G_{1,1}-F_1H_1\bigr)^2\label{eq:Gram-centered}\\
 &\le\bigl((n-1)F_2-F_1^2\bigr)
          \bigl((n-1)H_2-H_1^2\bigr).\notag
\end{align}
Since $(n-1)H_2-H_1^2=\sum_{i<j}(\lambda_i-\lambda_j)^2$, the last factor measures the differences among the transverse principal curvatures. Substituting $H_1=-3\lambda$, $H_2=S-\lambda^2$, and~\eqref{eq:G11} into~\eqref{eq:Gram-centered}, we obtain
\begin{align}
 &\bigl((n+2)\lambda T-3(n-1)\lambda'\bigr)^2\label{eq:moment-defect}\\
 &\le\bigl((n-1)F_2-T^2\bigr)
          \bigl((n-1)S-(n+8)\lambda^2\bigr).\notag
\end{align}
In particular, the trace relation gives
\begin{equation}\label{eq:trace-Cauchy}
\begin{aligned}
 S&\ge\lambda^2+\frac1{n-1}
       \left(\sum_{i=1}^{n-1}\lambda_i\right)^2
  =\frac{n+8}{n-1}\lambda^2.
\end{aligned}
\end{equation}
This estimate holds at every point where $\nabla H\ne0$, including points outside $M_A$, because it uses only the distinguished eigenvalue and the trace of $A$. In the spherical case, this is J.-H. Chen's estimate~\cite{ChenJH1993}; see also~\cite[Lemma~2.1]{BibiLoubeauOniciuc2021}.

The first equation in~\eqref{eq:system} has an equivalent polynomial form. Define
\begin{equation}\label{eq:P-definition}
\begin{aligned}
 \mathcal P={}&3G_{1,2}-3\lambda F_2+\lambda T^2-TG_{1,1}\\
 &+6\lambda H_2+9\lambda^3-3(2n+1)c\lambda.
\end{aligned}
\end{equation}
Since $\lambda=-H_1/3$, the expression $\mathcal P$ is a polynomial in $\lambda_i,a_i$.

\begin{proposition}\label{prop:compatibility}
Every solution of the last two equations of~\eqref{eq:system}, with~\eqref{eq:trace-def}, satisfies
\begin{equation}\label{eq:residual-polynomial}
 \lambda''-T\lambda'-\lambda(S-nc)=-\frac29\mathcal P.
\end{equation}
Consequently, the first equation in~\eqref{eq:system} is equivalent to $\mathcal P=0$ along the solution.
\end{proposition}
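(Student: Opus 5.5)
The plan is a direct computation. I express $\lambda'$ and $\lambda''$ through the power sums, using only the last two equations of~\eqref{eq:system} and the trace relation~\eqref{eq:trace-def}, and then compare the result with $\mathcal P$ coefficient by coefficient. No geometric input beyond these equations is needed, so the identity holds for every solution of the reduced first-order system.

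\emph{First derivative.} From $\lambda=-H_1/3$ and $\lambda_i'=(\lambda_i-\lambda)a_i$ I get
\[
H_1'=\sum_{i=1}^{n-1}(\lambda_i-\lambda)a_i=G_{1,1}-\lambda T,
\qquad\text{hence}\qquad
\lambda'=\tfrac13(\lambda T-G_{1,1}).
\]
This is exactly~\eqref{eq:G11}, now derived purely from the reduced system.

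\emph{Derivatives of $T$ and $G_{1,1}$.} Summing $a_i'=a_i^2+\lambda\lambda_i+c$ gives
\[
T'=F_2+\lambda H_1+(n-1)c=F_2-3\lambda^2+(n-1)c .
\]
Differentiating $G_{1,1}=\sum\lambda_ia_i$ with both equations gives
\[
G_{1,1}'=\sum_i(\lambda_i-\lambda)a_i^2+\sum_i\lambda_i\bigl(a_i^2+\lambda\lambda_i+c\bigr)=2G_{1,2}-\lambda F_2+\lambda H_2+cH_1 .
\]
These are~\eqref{eq:T-derivative} and~\eqref{eq:G-recursion}, which I re-derive here so that the argument uses nothing beyond the last two equations.

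\emph{Second derivative.} Differentiating the formula for $\lambda'$, I obtain
\[
\lambda''=-\tfrac13\bigl(G_{1,1}'-\lambda'T-\lambda T'\bigr)
=-\tfrac13\bigl[\,2G_{1,2}-2\lambda F_2+\lambda H_2+3\lambda^3-(n+2)c\lambda-T\lambda'\,\bigr].
\]

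\emph{Assembling the residual.} Next I subtract $T\lambda'+\lambda(S-nc)$, using $S=\lambda^2+H_2$ and substituting $\lambda'=\tfrac13(\lambda T-G_{1,1})$ in the combined term $-\tfrac23T\lambda'$. Collecting terms:
\begin{itemize}
\item the $G_{1,2}$ coefficient is $-\tfrac23$;
\item the $\lambda F_2$ coefficient is $+\tfrac23$;
\item the $\lambda H_2$ coefficient is $-\tfrac13-1=-\tfrac43$;
\item the $\lambda^3$ coefficient is $-1-1=-2$;
\item the $T$-terms are $-\tfrac29(\lambda T^2-TG_{1,1})$;
\item the $c\lambda$ coefficient is $\tfrac{n+2}3+n=\tfrac{2(2n+1)}3$.
\end{itemize}
These are precisely $-\tfrac29$ times the coefficients $3,\,-3,\,6,\,9,\,1,\,-3(2n+1)$ appearing in~\eqref{eq:P-definition}. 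This proves~\eqref{eq:residual-polynomial}.

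\emph{Equivalence.} The ``consequently'' part is then immediate: the left side of~\eqref{eq:residual-polynomial} vanishes if and only if $\mathcal P=0$. Moreover, $\mathcal P$ is a polynomial in $\lambda_i,a_i$ because $\lambda=-H_1/3$.

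The only real obstacle is bookkeeping. One must handle the factor $-\tfrac13$ in front of $-T\lambda'$ and the sign conventions for $c$ correctly, which is where the $(2n+1)$ coefficient arises. I would double-check this by verifying each monomial class separately, as in the list above.
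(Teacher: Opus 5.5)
Your proposal is correct and follows the paper's proof essentially step for step: differentiate $3\lambda'=\lambda T-G_{1,1}$, insert the formulas for $T'$ and $G_{1,1}'$, and then use $S=H_2+\lambda^2$ and substitute for $\lambda'$ in the $-\tfrac23 T\lambda'$ term. The only difference is that you re-derive \eqref{eq:T-derivative}, \eqref{eq:G11} and \eqref{eq:G-recursion} from the last two equations of \eqref{eq:system} and the trace relation rather than citing them, which makes explicit that the identity holds for every solution of the reduced system, as the statement asserts.
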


\begin{proof}
Differentiating $-3\lambda'=G_{1,1}-\lambda T$, by~\eqref{eq:T-derivative} and~\eqref{eq:G-recursion}, we derive
\[
\begin{aligned}
 -3\lambda''
 &=G_{1,1}'-\lambda'T-\lambda T'\\
 &=2G_{1,2}-\lambda F_2+\lambda H_2+cH_1-\lambda'T\\
 &\quad-\lambda\bigl(F_2-3\lambda^2+(n-1)c\bigr)\\
 &=2G_{1,2}-2\lambda F_2-\lambda'T
   +\lambda H_2+3\lambda^3-(n+2)c\lambda.
\end{aligned}
\]
Using $S=H_2+\lambda^2$ and then $3\lambda'=\lambda T-G_{1,1}$, we obtain
\[
\begin{aligned}
 &\lambda''-T\lambda'-\lambda(S-nc)\\
 &=-\frac23G_{1,2}+\frac23\lambda F_2-\frac23T\lambda'
       -\frac43\lambda H_2-2\lambda^3+\frac{4n+2}{3}c\lambda\\
 &=-\frac29\bigl(3G_{1,2}-3\lambda F_2+\lambda T^2-TG_{1,1}\\
 &\quad+6\lambda H_2+9\lambda^3-3(2n+1)c\lambda\bigr).
\end{aligned}
\]
The last expression is $-2\mathcal P/9$, as required.
\end{proof}

\section{Continuation estimates}\label{sec:estimates}

In this section, we regard~\eqref{eq:system}--\eqref{eq:trace-def} as a system of ordinary differential equations. Throughout this section, we suppose $c\le0$ or $c=1$.

\begin{lemma}\label{lem:analytic}
Let a smooth real solution of~\eqref{eq:system}--\eqref{eq:trace-def} be defined on a nonempty open interval $I_0$ containing $0$. After substituting the trace relation, extend the last two equations of~\eqref{eq:system} to their maximal interval $I\subset\R$, with $I_0\subset I$. Then the first equation of~\eqref{eq:system} holds on $I$.
\end{lemma}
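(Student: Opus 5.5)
The plan is to reduce the statement to the identity theorem for real analytic functions, using Proposition~\ref{prop:compatibility}.

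After substituting $\lambda=-\frac13\sum_i\lambda_i$, the last two equations of~\eqref{eq:system} form an autonomous first-order system
\[
 \lambda_i'=(\lambda_i-\lambda)a_i,\qquad a_i'=a_i^2+\lambda\lambda_i+c,\qquad 1\le i\le n-1,
\]
for the vector $X=(\lambda_1,\dots,\lambda_{n-1},a_1,\dots,a_{n-1})\in\R^{2n-2}$. Its right-hand side is a polynomial in $X$, hence real analytic and locally Lipschitz.

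First I will check that the maximal interval $I$ is well defined and that the extension agrees with the given solution on $I_0$. This follows from Picard--Lindel\"of uniqueness: the smooth solution on $I_0$ satisfies this first-order system, so it coincides on $I_0$ with the maximal solution having the same value at $t=0$.

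Next I will invoke the Cauchy--Kovalevskaya theorem (equivalently, analytic dependence for ODEs with analytic right-hand side). It gives that $X(t)$ is real analytic on all of $I$: near each $t_0\in I$ the solution is given by a convergent power series, and by uniqueness this series coincides with the maximal solution. Consequently $\lambda$, $T$, $S$, and every power sum $F_k,H_k,G_{l,m}$ are real analytic on $I$. In particular $\mathcal P(t)$, defined in~\eqref{eq:P-definition}, is real analytic on $I$, since it is a polynomial in $X(t)$.

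By Proposition~\ref{prop:compatibility}, which uses only the last two equations of~\eqref{eq:system} and the trace relation, we have on all of $I$
\[
 \lambda''-T\lambda'-\lambda(S-nc)=-\tfrac29\mathcal P .
\]
On $I_0$ the first equation of~\eqref{eq:system} holds by hypothesis, so $\mathcal P\equiv0$ on the nonempty open set $I_0$. Since $I$ is an interval, it is connected, and the identity theorem for real analytic functions of one variable then gives $\mathcal P\equiv0$ on $I$. Hence the first equation holds on $I$.

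No step is a real obstacle. The only points needing care are that the first equation involves $\lambda''$, which is automatically defined on $I$ because the extended solution is smooth (indeed analytic), and that analyticity is established on the whole maximal interval rather than only near $0$. Both are handled by the local power-series argument combined with uniqueness.
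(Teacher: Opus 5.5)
Your proposal is correct and follows essentially the same route as the paper. Both substitute the trace relation to get a polynomial autonomous system, note that its maximal solution is real analytic on $I$, and apply the identity theorem on the connected interval $I$. Your detour through Proposition~\ref{prop:compatibility} and $\mathcal P$ is harmless but unnecessary, since the paper simply observes that the residual $\lambda''-T\lambda'-\lambda(S-nc)$ is itself analytic on $I$ and vanishes on $I_0$.
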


\begin{proof}
Substituting $\lambda=-\frac13\sum_j\lambda_j$ into \eqref{eq:system} gives the autonomous system
\begin{equation}\label{eq:polynomial}
	\left\{
\begin{aligned}
 \lambda_i'&=\left(\lambda_i+\frac13\sum_{j=1}^{n-1}\lambda_j\right)a_i,\\
 a_i'&=a_i^2-\frac13\lambda_i\sum_{j=1}^{n-1}\lambda_j+c,
 \qquad 1\le i\le n-1,
\end{aligned}
	\right.
\end{equation}
where the right-hand sides of this system are polynomials in the $2n-2$ variables $\lambda_i,a_i$. The existence and uniqueness theorem gives a unique maximal solution on an open interval $I$ containing the original interval $I_0$. Since the vector field is polynomial, this solution is real analytic (see~\cite[Sections~2.6 and~4.1]{Teschl2012}).

Define $\lambda,T,S$ on $I$ by~\eqref{eq:trace-def}. These functions are analytic, as are all their derivatives. Consequently, the residual $\lambda''-T\lambda'-\lambda(S-nc)$ is real analytic on $I$. It vanishes on the original nonempty open interval $I_0$. The identity theorem for real analytic functions implies that it vanishes throughout the connected interval $I$, which yields the assertion.
\end{proof}

This continuation is applied to the polynomial system. It does not require an extension of the immersion or its principal frame. The scalar equation is continued from an open interval, rather than from its value at one point.

For each $i$, let us introduce a function $\rho_i$ defined by
  \[
  \rho_i(t)=\rho_i(0)\exp\bigl(-\int_0^t a_i(\tau)\,d\tau\bigr),
  \]
where $\rho_i(0)=\bigl(1+a_i(0)^2+\lambda_i(0)^2+|c|\bigr)^{-1/2}$.
Then
$\rho_i$ is the solution of the differential equation
\begin{align}\label{FU1}
 \rho_i'&=-a_i\rho_i.
\end{align}
$\rho_i$ is well defined and strictly positive on $I$.

\Needspace{18\baselineskip}
\begin{lemma}\label{lem:integrating}
The functions $\rho_i$ satisfy
\begin{align}
 &\bigl[(a_i^2+\lambda_i^2+c)\rho_i^2\bigr]'=0,
 \label{eq:pair-invariant}\\
 &\bigl[(\lambda-\lambda_i)\rho_i\bigr]'=\lambda'\rho_i,
 \label{eq:product-monotonicity}\\
 &\left(\lambda'\prod_{i=1}^{n-1}\rho_i\right)'
 =\lambda(S-nc)\prod_{i=1}^{n-1}\rho_i.
 \label{eq:weighted}
\end{align}
Moreover, $(a_i^2+\lambda_i^2+c)\rho_i^2\le1$ on $I$. In particular, for $c\le0$, one has
\begin{equation}\label{eq:D-nonpositive-bound}
\begin{aligned}
 &|a_i|\rho_i=|\rho_i'|\le1+\sqrt{-c}\,\rho_i,\\
 &|\lambda_i|\rho_i\le1+\sqrt{-c}\,\rho_i
\end{aligned}
\end{equation}
and for $c=1$, one has
\begin{equation}\label{eq:D-positive-bound}
 0<\rho_i\le1,\qquad |a_i|\rho_i\le1,\qquad
 |\lambda_i|\rho_i\le1.
\end{equation}
\end{lemma}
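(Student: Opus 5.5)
Each identity will be verified by direct differentiation, using the autonomous system \eqref{eq:polynomial} (equivalently the last two equations of \eqref{eq:system}) together with \eqref{FU1}. For \eqref{eq:weighted} we also use Lemma~\ref{lem:analytic}.

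\emph{The invariant \eqref{eq:pair-invariant}.} Differentiating, and using $a_i'=a_i^2+\lambda\lambda_i+c$, $\lambda_i'=(\lambda_i-\lambda)a_i$ and $(\rho_i^2)'=-2a_i\rho_i^2$, we get
\[
\bigl[(a_i^2+\lambda_i^2+c)\rho_i^2\bigr]'
=\rho_i^2\bigl(2a_i(a_i^2+\lambda\lambda_i+c)+2\lambda_i(\lambda_i-\lambda)a_i-2a_i(a_i^2+\lambda_i^2+c)\bigr).
\]
The bracket cancels term by term, so the derivative is zero.

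\emph{The product identity \eqref{eq:product-monotonicity}.} We compute
\[
\bigl[(\lambda-\lambda_i)\rho_i\bigr]'=\bigl(\lambda'-(\lambda_i-\lambda)a_i-(\lambda-\lambda_i)a_i\bigr)\rho_i=\lambda'\rho_i .
\]

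\emph{The weighted identity \eqref{eq:weighted}.} Set $P=\prod_i\rho_i$. Then $P'=-TP$, so $(\lambda'P)'=(\lambda''-T\lambda')P$. By Lemma~\ref{lem:analytic}, the first equation of \eqref{eq:system} holds on all of $I$. It gives $\lambda''-T\lambda'=\lambda(S-nc)$, which is \eqref{eq:weighted}.

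\emph{The bound by $1$.} Since $(a_i^2+\lambda_i^2+c)\rho_i^2$ is constant, it equals its value at $t=0$:
\[
\frac{a_i(0)^2+\lambda_i(0)^2+c}{1+a_i(0)^2+\lambda_i(0)^2+|c|}\le 1,
\]
because $c\le|c|$.

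\emph{The case $c=1$.} Here $(a_i^2+\lambda_i^2+1)\rho_i^2\le1$ with all terms nonnegative. Hence $\rho_i^2\le1$, $a_i^2\rho_i^2\le1$ and $\lambda_i^2\rho_i^2\le1$, which is \eqref{eq:D-positive-bound}. Positivity of $\rho_i$ comes from its definition as an exponential.

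\emph{The case $c\le0$.} Write $a_i^2\rho_i^2\le 1-\lambda_i^2\rho_i^2-c\rho_i^2\le 1+(-c)\rho_i^2$. Using $\sqrt{x+y}\le\sqrt x+\sqrt y$, this gives $|a_i|\rho_i\le1+\sqrt{-c}\,\rho_i$. The same argument bounds $|\lambda_i|\rho_i$. The equality $|a_i|\rho_i=|\rho_i'|$ is just \eqref{FU1}.

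There is no genuine obstacle here. The only point needing care is \eqref{eq:weighted}: it requires the scalar equation on the whole maximal interval $I$, not only on $I_0$, and this is exactly what Lemma~\ref{lem:analytic} supplies. We also rely on $\rho_i>0$ on all of $I$, which holds because $a_i$ is continuous there.
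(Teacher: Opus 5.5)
Your proposal is correct and follows essentially the same route as the paper. It uses direct differentiation with the last two equations of \eqref{eq:system} and $\rho_i'=-a_i\rho_i$, the normalization of $\rho_i(0)$ to bound the conserved quantity by $1$, and the elementary square-root estimate for $c\le0$; your explicit appeal to Lemma~\ref{lem:analytic} to justify \eqref{eq:weighted} on all of $I$ makes precise a point the paper leaves implicit.
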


\begin{proof}
Using the last two equations of~\eqref{eq:system}, we compute
\[
\begin{aligned}
 (a_i^2+\lambda_i^2+c)'
 &=2a_i(a_i^2+\lambda\lambda_i+c)
   +2\lambda_i(\lambda_i-\lambda)a_i\\
 &=2a_i(a_i^2+\lambda_i^2+c).
\end{aligned}
\]
The terms containing $\lambda\lambda_i$ cancel. Since $\rho_i'=-a_i\rho_i$, it follows that
\[
\begin{aligned}
 \bigl[(a_i^2+\lambda_i^2+c)\rho_i^2\bigr]'
 &=2a_i(a_i^2+\lambda_i^2+c)\rho_i^2\\
 &\quad-2a_i(a_i^2+\lambda_i^2+c)\rho_i^2=0,
\end{aligned}
\]
which yields~\eqref{eq:pair-invariant}. Using \eqref{FU1} and the second equation of~\eqref{eq:system}, we obtain \eqref{eq:product-monotonicity}.
Since the conserved expression has initial value
\[
 \frac{a_i(0)^2+\lambda_i(0)^2+c}
 {1+a_i(0)^2+\lambda_i(0)^2+|c|}\le1,
\]
it follows from \eqref{eq:pair-invariant} that
$(a_i^2+\lambda_i^2+c)\rho_i^2\le 1$ and hence
\begin{equation}\label{eq:pair-bound}
 (a_i^2+\lambda_i^2)\rho_i^2\le1-c\rho_i^2.
\end{equation}
For $c\le0$, the bound $\sqrt{1-c\rho_i^2}\le1+\sqrt{-c}\,\rho_i$ implies~\eqref{eq:D-nonpositive-bound}. This argument uses only an upper bound for the conserved expression, so it remains valid when that expression is zero or negative. For $c=1$, equation~\eqref{eq:pair-bound} gives~\eqref{eq:D-positive-bound}.

Next, the equations for $\lambda_i$ and $\rho_i$ satisfy
\[
\begin{aligned}
 \bigl[(\lambda-\lambda_i)\rho_i\bigr]'
 &=\bigl(\lambda'-(\lambda_i-\lambda)a_i\bigr)\rho_i
   -(\lambda-\lambda_i)a_i\rho_i\\
 &=\lambda'\rho_i.
\end{aligned}
\]
Finally, differentiating the product yields
\[
 \left(\prod_{i=1}^{n-1}\rho_i\right)'
 =-T\prod_{i=1}^{n-1}\rho_i
\]
and hence
\[
\begin{aligned}
 \left(\lambda'\prod_{i=1}^{n-1}\rho_i\right)'
 &=(\lambda''-T\lambda')\prod_{i=1}^{n-1}\rho_i=\lambda(S-nc)\prod_{i=1}^{n-1}\rho_i,
\end{aligned}
\]
which gives~\eqref{eq:weighted}.
\end{proof}

The fixed-index continuation estimate used below is inspired by the monotonicity argument in~\cite{Abraham2026} in the Euclidean case. We adapt it to the curvature-dependent system~\eqref{eq:polynomial} and record the full argument because the negative curvature term requires a different normalization.

\begin{lemma}\label{lem:bounded-factors}
Let $I$ be the maximal interval of a solution of~\eqref{eq:polynomial}, and assume that the scalar equation in~\eqref{eq:system} holds on $I$. Write $t_*=\sup I$. There is no $t_0\in I$ such that, on $[t_0,t_*)$,
\begin{equation}\label{eq:bounded-hypotheses}
\begin{aligned}
 &\lambda>0,\qquad \lambda'>0,\\
 &\lambda'\prod_{i=1}^{n-1}\rho_i\ge\delta>0,\\
 &0<\rho_i\le C_i,\qquad 1\le i\le n-1,
\end{aligned}
\end{equation}
where $\delta,C_1,\ldots,C_{n-1}$ are constants.
\end{lemma}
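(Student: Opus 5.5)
The plan is to argue by contradiction: assume some $t_0\in I$ satisfies \eqref{eq:bounded-hypotheses} on $[t_0,t_*)$. I will show first that $t_*<\infty$, and then that every component $\lambda_i,a_i$ stays bounded on $[t_0,t_*)$. The ODE continuation theorem for the polynomial system~\eqref{eq:polynomial} then lets the solution be extended past $t_*$, contradicting maximality of $I$. Two inputs from Lemma~\ref{lem:integrating} drive everything. The first is the monotonicity identity~\eqref{eq:product-monotonicity}: since $\lambda'>0$ and $\rho_i>0$, each $f_i:=(\lambda-\lambda_i)\rho_i$ is strictly increasing on $[t_0,t_*)$. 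The second is the bound~\eqref{eq:D-nonpositive-bound} (resp.~\eqref{eq:D-positive-bound}), which together with $\rho_i\le C_i$ gives uniform bounds on $|\lambda_i|\rho_i$ and $|a_i|\rho_i$.

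\emph{Step 1: a good index.} By the trace relation, $\sum_i\lambda_i=-3\lambda<0$, so at each $t$ some index satisfies $\lambda_i(t)\le-3\lambda(t)/(n-1)$. For that index, $0<\lambda\rho_i\le\frac{n-1}{3}|\lambda_i|\rho_i$, which is uniformly bounded. By pigeonhole, one fixed index $i$ works along a sequence $t_k\uparrow t_*$. Then $f_i(t_k)$ is bounded above, and since $f_i$ is increasing, $f_i$ is bounded on all of $[t_0,t_*)$. Hence
\[
\int_{t_0}^{t_*}\lambda'\rho_i\,dt=\lim_{t\to t_*}f_i(t)-f_i(t_0)<\infty .
\]
On the other hand, the hypothesis $\lambda'\prod_j\rho_j\ge\delta$ and $\rho_j\le C_j$ give $\lambda'\rho_i\ge\delta\big/\prod_{j\ne i}C_j$. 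Therefore $t_*<\infty$. The same lower bound $\lambda'\rho_j\ge\delta_j>0$ holds for every index $j$.

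\emph{Step 2: bounds at a finite endpoint.} Since $f_i$ is bounded and $|\lambda_i|\rho_i$ is bounded, $\lambda\rho_i$ is bounded. For every $j$, $f_j\ge f_j(t_0)$ gives a lower bound on $(\lambda-\lambda_j)\rho_j$. Next I would use logarithmic estimates. From $(\log\rho_j)'=-a_j$ and $|a_j|\rho_j\le K$, one gets $\rho_j'\ge-K$, so $\rho_j$ has a limit as $t\to t_*<\infty$. The aim is to show this limit is positive for every $j$. Once that holds, $\rho_j$ is bounded below on $[t_0,t_*)$, so $|a_j|$ and $|\lambda_j|$ are bounded, and hence so is $\lambda=-\frac13\sum\lambda_j$. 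The solution then stays in a compact set as $t\to t_*$, which contradicts maximality.

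\emph{Main obstacle.} The hard point is excluding $\rho_j\to0$ at the finite endpoint. My first attempt would be to handle all but one index directly through the bounds $\lambda'\rho_j\ge\delta_j$ combined with the boundedness of $f_j$, obtained by rerunning the Step 1 argument for each index that is selected infinitely often. This should give $\int(-a_j)\,dt>-\infty$, i.e.\ control of $\log\rho_j$. For the remaining pair, I would use the trace relation to express its $\lambda$-component through the others. The bound on $\lambda$ I would obtain by one more integration, namely $\lambda(t)=\lambda(t_0)+\int\lambda'$, with $\int\lambda'\rho_i<\infty$ and $\rho_i$ bounded below. This also requires the conserved quantity~\eqref{eq:pair-invariant}, which may be nonpositive when $c<0$, to bound $a_i$ from $\lambda_i$. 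I expect the bookkeeping of this last pair, and in particular getting a lower bound on $\rho_i$ without circularity, to be the delicate part.
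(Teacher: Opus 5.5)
Your Step~1 is correct, and it contains a valid shortcut. Since $f_i'=\lambda'\rho_i\ge\delta/\prod_{j\ne i}C_j>0$ and the increasing function $f_i$ is bounded above by its values at the good times, $t_*$ must be finite. This replaces the paper's separate case $t_*=\infty$, where linear growth of $\lambda$ is played against an upper bound for $\lambda$.

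However, the proposal stops exactly where the substance of the lemma lies: the positive lower bounds for the $\rho_j$. The routes you sketch for these are circular.
\begin{itemize}
\item The bound $\lambda'\rho_j\ge\delta_j$ only yields $|a_j|\le R/\rho_j\le R\lambda'/\delta_j$. Its integral is finite only if $\lambda$ is already bounded.
\item Writing $\lambda(t)-\lambda(t_0)=\int\lambda'\rho_i\cdot\rho_i^{-1}$ presupposes a lower bound on $\rho_i$.
\item Rerunning Step~1 only reaches indices that are selected infinitely often. Even for those, boundedness of $f_j$ alone gives only $\rho_j\ge\varepsilon_j/(\lambda-\lambda_j)$, which again needs $\lambda$ bounded.
\end{itemize}
The sign of the conserved quantity plays no role here: \eqref{eq:pair-bound} already gives $|a_j|\rho_j\le R$ and $|\lambda_j|\rho_j\le R$ for a constant $R$.

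The missing idea is to route every other index through the single good index $i=1$, using the product hypothesis. Let $K$ bound $f_1$ and set $\varepsilon=f_1(t_1)>0$. For $j\ne1$,
\[
 |a_j|\le\frac{R}{\rho_j}\le\frac{R\lambda'}{\delta}\prod_{k\ne j}\rho_k
 \le\frac{R}{\delta}\Bigl(\prod_{k\ne1,j}C_k\Bigr)\lambda'\rho_1
 =C_{1j}\,f_1'.
\]
Hence $\int_{t_1}^{t}|a_j|\le C_{1j}(K-\varepsilon)$. This bounds $\log\rho_j$, and therefore bounds $\rho_j$ from below and $|\lambda_j|,|a_j|$ from above, for all $j\ne1$ simultaneously.

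For the remaining pair, the trace relation gives $0<\lambda-\lambda_1=4\lambda+\sum_{j\ne1}\lambda_j\le4\lambda+C$. Therefore
\[
 \frac{\lambda'}{4\lambda+C}\le\frac{\lambda'}{\lambda-\lambda_1}=\frac{f_1'}{f_1}.
\]
Integrating gives $4\lambda+C\le(4\lambda(t_1)+C)(K/\varepsilon)^4$. Then $\rho_1=f_1/(\lambda-\lambda_1)\ge\varepsilon/(4\lambda+C)$ is bounded below.

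With these two steps inserted, your finite-endpoint argument closes: every component is bounded on $[t_1,t_*)$ with $t_*<\infty$, contradicting maximality.
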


\begin{proof}
Suppose that~\eqref{eq:bounded-hypotheses} holds. By~\eqref{eq:pair-bound}, we find
\[
 (a_i^2+\lambda_i^2)\rho_i^2\le1+|c|C_i^2.
\]
Choose a constant $R>0$ larger than all $\sqrt{1+|c|C_i^2}$. Then
\begin{equation}\label{eq:R-bound}
 |a_i|\rho_i\le R,\qquad |\lambda_i|\rho_i\le R,
 \qquad 1\le i\le n-1.
\end{equation}
The bound for the product in~\eqref{eq:bounded-hypotheses} also gives
\begin{equation}\label{eq:lambda-positive-lower}
 \lambda'\ge\frac{\delta}{\prod_{i=1}^{n-1}\rho_i}
            \ge\frac{\delta}{\prod_{i=1}^{n-1}C_i}>0.
\end{equation}
We first bound all pairs $(\lambda_j,a_j)$ except one, and then use the trace relation to control the remaining pair.

At every $t\in[t_0,t_*)$, the equation $\sum_i\lambda_i=-3\lambda$ implies that at least one index $i$ satisfies $\lambda_i(t)\le-3\lambda(t)/(n-1)<0$. For that index, it follows that
\begin{equation}\label{eq:index-bound}
\begin{aligned}
 0<(\lambda-\lambda_i)\rho_i
 &=\left(1+\frac{\lambda}{|\lambda_i|}\right)|\lambda_i|\rho_i\\
 &\le\left(1+\frac{n-1}{3}\right)R
 =\frac{n+2}{3}R=:K.
\end{aligned}
\end{equation}
This index need not be independent of $t$. Choose an increasing sequence $t_k\to t_*$. There are only finitely many indices, so one index satisfies~\eqref{eq:index-bound} along an infinite subsequence. Relabel this index as $1$, and retain the notation $t_k$ for the subsequence.

By~\eqref{eq:product-monotonicity}, $(\lambda-\lambda_1)\rho_1$ is strictly increasing. For any $t<t_*$, choose $k$ with $t<t_k$. Then
\[
 (\lambda(t)-\lambda_1(t))\rho_1(t)
 \le(\lambda(t_k)-\lambda_1(t_k))\rho_1(t_k)\le K.
\]
Choose $t_1$ from this subsequence and put
$\varepsilon=(\lambda(t_1)-\lambda_1(t_1))\rho_1(t_1)>0$. Monotonicity now gives
\begin{equation}\label{eq:fixed-index}
 \varepsilon\le(\lambda-\lambda_1)\rho_1\le K,
 \qquad t_1\le t<t_*.
\end{equation}

For $j\ne1$, equation~\eqref{eq:bounded-hypotheses} implies
\[
 \frac1{\rho_j}\le\frac{\lambda'}{\delta}\prod_{k\ne j}\rho_k,
\]
which together with~\eqref{eq:R-bound} and \eqref{eq:product-monotonicity} yields
\begin{equation}\label{eq:log-control}
\begin{aligned}
 |(\log\rho_j)'|=|a_j|
 &\le\frac R{\rho_j}
 \le\frac{R\lambda'}\delta\prod_{k\ne j}\rho_k\\
 &\le\frac R\delta\left(\prod_{k\ne1,j}C_k\right)\lambda'\rho_1=C_{1j}\bigl[(\lambda-\lambda_1)\rho_1\bigr]',
\end{aligned}
\end{equation}
where $C_{1j}=R\delta^{-1}\prod_{k\ne1,j}C_k$. Unless specified otherwise, all these indices range from $1$ to $n-1$. Integrating~\eqref{eq:log-control} and using \eqref{eq:fixed-index}, we obtain
\[
\begin{aligned}
 \left|\log\frac{\rho_j(t)}{\rho_j(t_1)}\right|
 &\le\int_{t_1}^{t}|(\log\rho_j)'(\tau)|\,d\tau\\
 &\le C_{1j}\bigl((\lambda(t)-\lambda_1(t))\rho_1(t)-\varepsilon\bigr)\\
 &\le C_{1j}(K-\varepsilon).
\end{aligned}
\]
In particular, we have
\begin{equation}\label{eq:D-lower-other}
 \rho_j(t)\ge\rho_j(t_1)e^{-C_{1j}(K-\varepsilon)}>0,
 \qquad t_1\le t<t_*.
\end{equation}
The lower bound is independent of $t$. Equations~\eqref{eq:R-bound} and~\eqref{eq:D-lower-other} therefore bound both $|a_j|$ and $|\lambda_j|$ for every $j\ne1$.

Choose $C\ge0$ such that $|\sum_{j\ne1}\lambda_j|\le C$ on $[t_1,t_*)$. By the trace relation and~\eqref{eq:fixed-index},
\begin{equation}\label{eq:last-denominator}
0<\lambda-\lambda_1
 =4\lambda+\sum_{j\ne1}\lambda_j\le4\lambda+C.
\end{equation}
Both denominators in the following calculation are positive. Since $\lambda'>0$, equation~\eqref{eq:product-monotonicity} gives
\[
\begin{aligned}
 \frac{\lambda'}{4\lambda+C}
 \le\frac{\lambda'}{\lambda-\lambda_1}=\frac{\bigl[(\lambda-\lambda_1)\rho_1\bigr]'}
         {(\lambda-\lambda_1)\rho_1}.
\end{aligned}
\]
Integrating from $t_1$ to $t$ and applying~\eqref{eq:fixed-index}, we obtain
\begin{equation}\label{eq:lambda-log-bound}
\begin{aligned}
 \frac14\log\frac{4\lambda(t)+C}{4\lambda(t_1)+C}
 &\le\log\frac{(\lambda(t)-\lambda_1(t))\rho_1(t)}{\varepsilon}\\
 &\le\log\frac K\varepsilon.
\end{aligned}
\end{equation}
Consequently, we have
\[
 4\lambda(t)+C\le\bigl(4\lambda(t_1)+C\bigr)
                         \left(\frac K\varepsilon\right)^4.
\]
This is a uniform upper bound for $\lambda$. Inserting it into~\eqref{eq:last-denominator} and using~\eqref{eq:fixed-index}, we find
\[
\begin{aligned}
 \rho_1(t)
 &=\frac{(\lambda(t)-\lambda_1(t))\rho_1(t)}
         {\lambda(t)-\lambda_1(t)}\\
 &\ge\frac{\varepsilon}{4\lambda(t)+C}\\
 &\ge\frac{\varepsilon}
 {\bigl(4\lambda(t_1)+C\bigr)(K/\varepsilon)^4}>0.
\end{aligned}
\]
Thus $\rho_1$ also has a uniform positive lower bound. Equation~\eqref{eq:R-bound} bounds $|a_1|$ and $|\lambda_1|$, so every component of the polynomial system is bounded on $[t_1,t_*)$.

Suppose first that $t_*<\infty$. The right-hand sides of~\eqref{eq:polynomial} are bounded on the bounded range just obtained. Hence all $\lambda_i'$ and $a_i'$ are bounded on $[t_1,t_*)$. By integration, each $\lambda_i$ and $a_i$ is uniformly Lipschitz on this interval and has a finite limit as $t\uparrow t_*$. Use these limits as initial data at $t_*$. Local existence gives a solution on a neighborhood of $t_*$, and uniqueness identifies it with the original solution on the overlap. This extends the solution beyond $t_*$, contrary to maximality (see also~\cite[Corollary~2.15]{Teschl2012}).

It follows that $t_*=\infty$. However, integrating~\eqref{eq:lambda-positive-lower} gives
\[
 \lambda(t)\ge\lambda(t_1)
 +\frac{\delta}{\prod_{i=1}^{n-1}C_i}(t-t_1),
 \qquad t\ge t_1.
\]
The right-hand side tends to infinity, contradicting~\eqref{eq:lambda-log-bound}, which completes the proof of Lemma \ref{lem:bounded-factors}.
\end{proof}

\section{Biharmonic hypersurfaces in space forms \texorpdfstring{$N^{n+1}(c)$}{N(n+1,c)} with \texorpdfstring{$c\leq0$}{c <= 0}}\label{sec:nonpositive}

The next proposition for the differential system is useful for proving our main theorem.

\begin{proposition}\label{prop:nonpositive-ODE}
Let $c\le0$. Every smooth real solution of~\eqref{eq:system}--\eqref{eq:trace-def} on a connected open interval satisfies $\lambda\equiv0$.
\end{proposition}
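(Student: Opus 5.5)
We argue by contradiction, treating separately the constant and the nonconstant case.

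\emph{Constant $\lambda$.} If $\lambda$ is constant, then $\lambda'=\lambda''=0$, and the first equation of~\eqref{eq:system} reduces to $\lambda(S-nc)=0$. By~\eqref{eq:trace-Cauchy}, for $c\le0$ and $\lambda\ne0$ we have
\[
S-nc\ge\frac{n+8}{n-1}\lambda^2-nc>0 .
\]
Hence $\lambda\equiv0$ in this case.

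\emph{Reduction in the nonconstant case.} Suppose $\lambda$ is nonconstant. By Lemma~\ref{lem:analytic}, pass to the maximal interval $I$ of~\eqref{eq:polynomial}; the scalar equation holds there and everything is real analytic. Since $\lambda$ is analytic and nonconstant, there is $t_0$ with $\lambda(t_0)\ne0$ and $\lambda'(t_0)\ne0$. The system has two discrete symmetries:
\begin{itemize}
\item the sign change $(\lambda_i,a_i)\mapsto(-\lambda_i,a_i)$, which sends $\lambda\mapsto-\lambda$;
\item the time reversal $t\mapsto-t$ together with $a_i\mapsto-a_i$.
\end{itemize}
Both preserve~\eqref{eq:system}--\eqref{eq:trace-def}; one checks each equation, including $\lambda''=T\lambda'+\lambda(S-nc)$. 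Using them, we may assume $\lambda(t_0)>0$ and $\lambda'(t_0)>0$.

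\emph{Forward invariance of the sign conditions.} By~\eqref{eq:weighted}, $Q=\lambda'\prod\rho_i$ satisfies $Q'=\lambda(S-nc)\prod\rho_i>0$ wherever $\lambda>0$. A continuity argument then shows that on $[t_0,t_*)$ we have $\lambda>0$, $\lambda'>0$ and $Q\ge\delta:=Q(t_0)>0$. So all hypotheses of Lemma~\ref{lem:bounded-factors} hold, except the upper bounds $\rho_i\le C_i$.

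\emph{Main obstacle: upper bounds for $\rho_i$.} Securing $\rho_i\le C_i$ is the step I expect to be hardest. Since $\lambda$ is increasing, there are two regimes.

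Suppose first that $\lambda$ eventually exceeds $\sqrt{-c}$, which is automatic for $c=0$ once $\lambda>0$. Then I would bound $\rho_i$ using the shifted quotient
\[
\frac{(\lambda-\lambda_i)\rho_i}{\lambda-\sqrt{-c}} .
\]
It combines~\eqref{eq:product-monotonicity} with the invariant~\eqref{eq:pair-invariant}, and uses that $\lambda^2+c>0$ in this range to keep $a_i^2+\lambda_i^2+c$ away from zero along the relevant indices. The first thing to try is to split the indices according to whether $\lambda_i\le\lambda-\eta$ for some fixed $\eta>0$ or not. For the first group, $\rho_i\le(\lambda-\lambda_i)\rho_i/\eta$ is bounded above via monotonicity at later times, as in the proof of Lemma~\ref{lem:bounded-factors}. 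For the second group, $\lambda_i^2+c\ge(\lambda-\eta)^2+c>0$, and~\eqref{eq:pair-invariant} gives $\rho_i^2\le\kappa_i/((\lambda-\eta)^2+c)$. Once $\rho_i\le C_i$ is established, Lemma~\ref{lem:bounded-factors} yields a contradiction.

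\emph{Bounded range.} This leaves the case $0<\lambda\le\sqrt{-c}$ on all of $[t_0,t_*)$. For $c=0$ this contradicts $\lambda>0$. For $c<0$, first observe $F_2\ge T^2/(n-1)$, and apply this in~\eqref{eq:T-derivative}, giving
\[
T'\ge\frac{T^2}{n-1}-3\lambda^2+(n-1)c\ge\frac{T^2}{n-1}-(n+2)|c| .
\]
Next, this Riccati inequality shows that $T$ cannot be very negative for long: if $T$ were very negative, $T'$ would be positive and large. Moreover, $T$ would blow up to $+\infty$ in finite time if it became too large, which the maximal analytic solution excludes. Hence $T\ge-\sqrt{(n-1)(n+2)|c|}$ on a forward interval. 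Then, with $\lambda'>0$, $\lambda\ge\lambda(t_0)>0$, and $S-nc\ge n|c|$, the scalar equation gives
\[
\lambda''\ge T\lambda'+n|c|\,\lambda(t_0).
\]
Integrating this with the weight $\prod\rho_i$ as in~\eqref{eq:weighted}, or directly if $T\ge0$, should force $\lambda'$ to stay bounded below by a positive constant. Then $\lambda$ grows linearly: if $t_*=\infty$, this contradicts $\lambda\le\sqrt{-c}$. If $t_*<\infty$, the bounded range makes the continuation argument of Lemma~\ref{lem:bounded-factors} applicable, again a contradiction. Therefore $\lambda$ is constant, hence $\lambda\equiv0$.
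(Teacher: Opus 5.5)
Your skeleton matches the paper's: the constant case, the two symmetries, forward invariance of $\lambda>0$, $\lambda'>0$ and $\lambda'\prod\rho_i\ge\delta$, reduction to Lemma~\ref{lem:bounded-factors}, and a Riccati barrier for $T$. However, the step you flag as the main obstacle is not resolved, and the index splitting you suggest does not work.

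\textbf{The upper bound for $\rho_i$ when $\lambda>\sqrt{-c}$.} For indices with $\lambda_i\le\lambda-\eta$ you need an upper bound for $(\lambda-\lambda_i)\rho_i$. This quantity is increasing by \eqref{eq:product-monotonicity}, so monotonicity only carries upper bounds backward from later times, and no such later bound is available. In Lemma~\ref{lem:bounded-factors} the bound along $t_k\to t_*$ came from $|\lambda_i|\rho_i\le R$, which for $c<0$ already presupposes $\rho_i\le C_i$. It also applied only to an index with $\lambda_i\le-3\lambda/(n-1)$, not to every index with $\lambda_i\le\lambda-\eta$. In addition, membership in your two groups changes with $t$. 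Your unshifted quotient $(\lambda-\lambda_i)\rho_i/(\lambda-\sqrt{-c})$ has derivative $\lambda'(\lambda_i-\sqrt{-c})\rho_i/(\lambda-\sqrt{-c})^2$, which has indefinite sign.

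The missing idea is to add $1$ to the numerator:
\[
\left(\frac{(\lambda-\lambda_i)\rho_i+1}{\lambda-\sqrt{-c}}\right)'=\frac{\lambda'}{(\lambda-\sqrt{-c})^2}\bigl((\lambda_i-\sqrt{-c})\rho_i-1\bigr)\le0,
\]
since $\lambda_i\rho_i\le1+\sqrt{-c}\,\rho_i$ by \eqref{eq:D-nonpositive-bound}. The same inequality gives $(\lambda-\sqrt{-c})\rho_i\le(\lambda-\lambda_i)\rho_i+1$. Hence once $\lambda(t_1)>\sqrt{-c}$, every $\rho_i$ is bounded on $[t_1,t_*)$ by the value of this quotient at $t_1$, with no case split.

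\textbf{The bounded range $0<\lambda\le\sqrt{-c}$.} There are further gaps here.
\begin{itemize}
\item If $t_*<\infty$, Lemma~\ref{lem:bounded-factors} still requires $\rho_i\le C_i$, and a bound on $\lambda$ does not supply this. The paper gets it from Gronwall applied to $\rho_i'=-a_i\rho_i\le1+\sqrt{-c}\,\rho_i$, giving $\rho_i(t)\le e^{\sqrt{-c}\,t}(\rho_i(0)+t)$. This rules out $t_*<\infty$ before splitting into regimes.
\item Your remark that maximality excludes finite-time blow-up of $T$ is false, though you do not need it.
\item Integrating with the weight $\prod\rho_i$ cannot bound $\lambda'$ from below without upper bounds on the $\rho_i$.
\item Your bound $T\ge-\sqrt{(n-1)(n+2)|c|}$ need not hold if $T(t_0)$ starts below it.
\end{itemize}
To fix the last two points, take $B>\max\{|T(t_0)|,\sqrt{(n-1)(n+2)|c|}\}$. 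A first-hitting argument with your Riccati inequality gives $T>-B$ on $[t_0,t_*)$. Since $\lambda'>0$, this yields $\lambda''+B\lambda'\ge n|c|\lambda(t_0)$. Integrating against $e^{Bt}$ gives $\liminf\lambda'\ge n|c|\lambda(t_0)/B>0$, which contradicts $\lambda\le\sqrt{-c}$ once $t_*=\infty$ is known.
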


\begin{proof}
Suppose that $\lambda$ is nonconstant. Then there exists a point at which $\lambda'\ne0$. The system is invariant under the simultaneous replacement of $\lambda,\lambda_i$ by $-\lambda,-\lambda_i$, and under the parameter reversal
\[
 (\lambda(t),\lambda_i(t),a_i(t))
 \longmapsto(\lambda(-t),\lambda_i(-t),-a_i(-t)).
\]
After translating the parameter and applying these transformations, we may assume
\[
 \lambda(0)>0,\qquad \lambda'(0)>0.
\]
Let us extend~\eqref{eq:polynomial} to its maximal interval $I$ and write $t_*=\sup I$. By Lemma~\ref{lem:analytic}, the scalar equation remains valid on $I$.

As long as $\lambda>0$, we have $S-nc\ge\lambda^2>0$. Equation~\eqref{eq:weighted} therefore shows that
\[
 \lambda'(t)\prod_{i=1}^{n-1}\rho_i(t)
 \ge \lambda'(0)\prod_{i=1}^{n-1}\rho_i(0)=:\delta>0.
\]
Since each $\rho_i$ is positive, $\lambda'>0$ as long as $\lambda>0$. Hence $\lambda$ cannot return to zero, and on the whole maximal forward interval,
\begin{equation}\label{eq:nonpositive-growth}
 \lambda(t)\ge\lambda(0)>0,\qquad
 \lambda'(t)>0,\qquad
 \lambda'(t)\prod_{i=1}^{n-1}\rho_i(t)\ge\delta>0.
\end{equation}

We first exclude finite-time breakdown. From~\eqref{eq:D-nonpositive-bound},
\[
 \rho_i'\le1+\sqrt{-c}\,\rho_i.
\]
Multiplying by $e^{-\sqrt{-c}\,t}$ and integrating gives
\[
 \rho_i(t)\le e^{\sqrt{-c}\,t}\bigl(\rho_i(0)+t\bigr),
 \qquad 0\le t<t_*.
\]
Thus, if $t_*<\infty$, every $\rho_i$ is bounded above on $[0,t_*)$. Together with~\eqref{eq:nonpositive-growth}, this contradicts Lemma~\ref{lem:bounded-factors}. Consequently,
\[
 t_*=\infty.
\]

We next prove that
\begin{equation}\label{eq:lambda-hyp-bound}
 0<\lambda(0)\le\lambda(t)\le\sqrt{-c},\qquad t\ge0.
\end{equation}
Suppose instead that $\lambda(t_0)>\sqrt{-c}$ at some $t_0\ge0$. Since $\lambda'>0$, the same inequality holds for all $t\ge t_0$. By~\eqref{eq:D-nonpositive-bound},
\[
 (\lambda-\sqrt{-c})\rho_i
 \le(\lambda-\lambda_i)\rho_i+1.
\]
Using~\eqref{eq:product-monotonicity}, we obtain
\[
\begin{aligned}
 \left(
 \frac{(\lambda-\lambda_i)\rho_i+1}{\lambda-\sqrt{-c}}
 \right)'
 &=\frac{\lambda'}{(\lambda-\sqrt{-c})^2}
   \bigl((\lambda_i-\sqrt{-c})\rho_i-1\bigr)\le0.
\end{aligned}
\]
Therefore, for $t\ge t_0$,
\[
\begin{aligned}
 0<\rho_i(t)
 &\le\frac{(\lambda(t)-\lambda_i(t))\rho_i(t)+1}
          {\lambda(t)-\sqrt{-c}}\\
 &\le\frac{(\lambda(t_0)-\lambda_i(t_0))\rho_i(t_0)+1}
          {\lambda(t_0)-\sqrt{-c}}.
\end{aligned}
\]
All the factors $\rho_i$ are then uniformly bounded above on $[t_0,\infty)$, again contradicting Lemma~\ref{lem:bounded-factors}. This shows~\eqref{eq:lambda-hyp-bound}. When $c=0$, the inequality is impossible, so it remains only to consider $c<0$.

Assume now that $c<0$. Summing the equations for $a_i$ and using $\sum_i\lambda_i=-3\lambda$, we obtain
\[
 T'=F_2-3\lambda^2+(n-1)c.
\]
By the Cauchy--Schwarz inequality, $F_2\ge T^2/(n-1)$, while~\eqref{eq:lambda-hyp-bound} gives $\lambda^2\le-c$. Hence
\begin{equation}\label{eq:T-barrier}
 T'\ge\frac{T^2}{n-1}+(n+2)c.
\end{equation}
Choose
\[
 B>\max\left\{|T(0)|,\sqrt{-(n-1)(n+2)c}\right\}.
\]
We claim that $T(t)>-B$ for all $t\ge0$. Otherwise, if $t_1$ is the first point with $T(t_1)=-B$, then $T'(t_1)\le0$, whereas~\eqref{eq:T-barrier} gives
\[
 T'(t_1)\ge\frac{B^2}{n-1}+(n+2)c>0,
\]
a contradiction.

Using $T+B>0$, $\lambda'>0$, $S\ge0$, and $c<0$, the scalar equation yields
\[
\begin{aligned}
 \lambda''+B\lambda'
 &=(T+B)\lambda'+\lambda(S-nc)\\
 &\ge -nc\,\lambda(0)>0.
\end{aligned}
\]
Multiplying by $e^{Bt}$ and integrating gives
\[
 \lambda'(t)\ge e^{-Bt}\lambda'(0)
 +\frac{-nc\,\lambda(0)}{B}(1-e^{-Bt}).
\]
Thus
\[
 \liminf_{t\to\infty}\lambda'(t)
 \ge\frac{-nc\,\lambda(0)}B>0.
\]
It follows that $\lambda(t)$ grows at least linearly for large $t$, contradicting the upper bound $\lambda(t)\le\sqrt{-c}$ in~\eqref{eq:lambda-hyp-bound}.

Therefore $\lambda$ must be constant. Substitution into the scalar equation gives $\lambda(S-nc)=0$. If $\lambda\ne0$, then $S-nc\ge\lambda^2>0$, which is impossible. Hence $\lambda\equiv0$.
\end{proof}

\begin{proof}[Proof of Theorem~\ref{thm:nonpositive}]
Suppose that $H$ is nonconstant on a connected component of $M^n$. Then $U=\{p\in M:\nabla H(p)\ne0\}$ is a nonempty open subset. Since $M_A$ is dense in $M^n$, one has $U\cap M_A\ne\varnothing$. Choose a point of $U\cap M_A$ and a local principal frame as in Section~\ref{sec:geometry}. Along an integral curve of $e_n$ in this neighborhood, Proposition~\ref{prop:system} gives a solution of~\eqref{eq:system}--\eqref{eq:trace-def} with $\lambda'=-ne_n(H)/2\ne0$. This contradicts Proposition~\ref{prop:nonpositive-ODE}. Thus $H$ is locally constant.

Equation~\eqref{eq:biharmonic-normal} now gives $H(S-nc)=0$. If $H\ne0$ on a local normal neighborhood, then $S=nc\le0$. However, by the Cauchy--Schwarz inequality and using $\trace A=nH$ we get $S\ge nH^2>0$, a contradiction. Therefore $H=0$ on every such neighborhood, and the hypersurface is minimal.
\end{proof}

\section{Biharmonic hypersurfaces in \texorpdfstring{$\SSS^{n+1}$}{S(n+1)}}\label{sec:sphere}

We now take $c=1$. The factors $\rho_i$ are bounded above by $1$, but the term $S-n$ in~\eqref{eq:weighted} may change sign. The trace estimate~\eqref{eq:trace-Cauchy} identifies the range in which the continuation argument applies.

\begin{proposition}\label{prop:positive-ODE}
If a smooth real solution of~\eqref{eq:system}--\eqref{eq:trace-def} with $c=1$ has nonconstant $\lambda$, then
\begin{equation}\label{eq:lambda-sphere-bound}
 \lambda^2<\frac{n(n-1)}{n+8}
\end{equation}
throughout its maximal analytic continuation.
\end{proposition}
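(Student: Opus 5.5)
Suppose, for a contradiction, that at some $t_0$ in the maximal interval $I$ we have $\lambda(t_0)^2\ge n(n-1)/(n+8)$. By Lemma~\ref{lem:analytic} the scalar equation holds on all of $I$. By the trace estimate~\eqref{eq:trace-Cauchy}, at every point of $I$ where $\lambda^2\ge n(n-1)/(n+8)$ we get
\[
 S-n\ge\frac{n+8}{n-1}\lambda^2-n\ge0,
\]
with strict inequality when $\lambda^2$ exceeds the threshold. We then run the continuation argument of Proposition~\ref{prop:nonpositive-ODE} in this range, with $c=1$.

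\emph{Normalization.} The sign symmetry $(\lambda,\lambda_i)\mapsto(-\lambda,-\lambda_i)$ and the time reversal $(\lambda(t),\lambda_i(t),a_i(t))\mapsto(\lambda(-t),\lambda_i(-t),-a_i(-t))$ both preserve the system with $c=1$. So we may assume $\lambda(t_0)\ge\sqrt{n(n-1)/(n+8)}=:\mu>0$.

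\emph{Reaching $\lambda'>0$.} The first step is to arrange $\lambda'(t_0)>0$, with $\lambda(t_0)\ge\mu$ still holding.
\begin{itemize}
\item If $\lambda'(t_0)\ne0$, apply the time reversal if needed.
\item If $\lambda'(t_0)=0$, use analyticity: $\lambda$ is nonconstant, so some derivative $\lambda^{(k)}(t_0)$ is nonzero, and zeros of $\lambda'$ are isolated. Moving slightly to one side of $t_0$ in the direction where $\lambda$ does not decrease gives a nearby point with $\lambda'>0$ and $\lambda\ge\lambda(t_0)\ge\mu$. Near a local maximum of $\lambda$ we instead take the direction away from it.
\end{itemize}
If $\lambda$ has a strict local minimum at $t_0$, then $\lambda''(t_0)\ge0$. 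In the boundary case $\lambda(t_0)=\mu$, the scalar equation gives $\lambda''(t_0)=\lambda(S-n)\ge0$. Either way a forward point with $\lambda>\mu$ and $\lambda'>0$ is obtained.

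\emph{Forward monotonicity.} Once $\lambda>\mu$ and $\lambda'>0$, equation~\eqref{eq:weighted} shows that $\lambda'\prod_i\rho_i$ is nondecreasing as long as $\lambda\ge\mu$. So $\lambda'$ stays positive and $\lambda$ stays above $\mu$ on the whole forward maximal interval. This yields $\lambda>0$, $\lambda'>0$ and $\lambda'\prod_i\rho_i\ge\delta>0$ there. By~\eqref{eq:D-positive-bound}, $\rho_i\le1$, so all hypotheses~\eqref{eq:bounded-hypotheses} of Lemma~\ref{lem:bounded-factors} hold with $C_i=1$. The lemma then gives a contradiction directly, with no need for the shifted-quotient step used when $c<0$.

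\emph{Main obstacle.} The delicate step is the equality/critical case: a point where $\lambda^2$ equals the threshold while $\lambda'$ vanishes or points the wrong way. Here one must show that $\lambda$ really moves into the open region $\lambda>\mu$ with $\lambda'>0$. I would handle it through analyticity and nonconstancy. If $\lambda$ attains a value $\ge\mu$, then either it exceeds $\mu$ somewhere, which lets us start from a point with $\lambda>\mu$, $\lambda'\neq0$, or $\mu$ is a local maximum of $\lambda$.

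The local-maximum case is excluded by the scalar equation: at such a point $\lambda'=0$ and $\lambda''=\lambda(S-n)\ge0$. If $\lambda''>0$, this contradicts the maximum. If $\lambda''=0$, then $S$ attains equality in~\eqref{eq:trace-Cauchy}. I would then differentiate the scalar equation again, or use that $S-n\ge0$ on a neighborhood where $\lambda\ge\mu$. This shows that $\lambda$ is a convex-type supersolution with $\lambda\le\mu$ near the point, which by analyticity forces $\lambda\equiv\mu$, contradicting nonconstancy. This last subcase is the one I expect to need the most care.
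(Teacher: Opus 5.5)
\textbf{Verdict: genuine gap in the equality case.} For the case $\lambda>\mu$ somewhere, your argument matches the paper's. You normalize with the two symmetries and use \eqref{eq:trace-Cauchy} to get $S-n\ge0$ while $\lambda\ge\mu$. You then propagate $\lambda'\prod_i\rho_i\ge\delta$ forward via \eqref{eq:weighted} and apply Lemma~\ref{lem:bounded-factors} with $C_i=1$.

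You correctly isolate the equality case, an interior maximum $t_0$ with $\lambda(t_0)=\mu$, but you do not close it. At such a point, $\lambda'(t_0)=0$ gives $\lambda''(t_0)=\mu\bigl(S(t_0)-n\bigr)\ge0$. Being a maximum gives $\lambda''(t_0)\le0$. So $\lambda''(t_0)=0$ always, your branch ``$\lambda''>0$'' is vacuous, and the entire difficulty lies in the degenerate subcase.

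There, the input you propose, ``$S-n\ge0$ on a neighborhood where $\lambda\ge\mu$,'' is not available. Since $\lambda$ is analytic, nonconstant, and has maximum value $\mu$ at $t_0$, we have $\lambda<\mu$ on a punctured neighborhood of $t_0$. On that set \eqref{eq:trace-Cauchy} only gives $S-n\ge\frac{n+8}{n-1}(\lambda^2-\mu^2)$, which is negative. The sign of $\lambda''-T\lambda'$ is therefore not controlled, and the ``convex-type supersolution'' conclusion does not follow. Your alternative, ``differentiate the scalar equation again,'' would require controlling higher derivatives of $S$ through an induction on the order of vanishing, which you do not carry out.

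The missing observation is that the negative part of the lower bound is only linear in the deficit $\mu-\lambda$, so it vanishes two orders faster than $\lambda''$. The paper writes $\lambda=\mu-d(t-t_0)^{2k}+O(|t-t_0|^{2k+1})$ with $d>0$ and $k\ge1$. Then
\[
 \lambda''-T\lambda'=-2k(2k-1)d(t-t_0)^{2k-2}+O\bigl(|t-t_0|^{2k-1}\bigr),\qquad
 \tfrac{n+8}{n-1}\lambda(\lambda^2-\mu^2)=-2nd(t-t_0)^{2k}+O\bigl(|t-t_0|^{2k+1}\bigr).
\]
Dividing $\lambda''-T\lambda'\ge\frac{n+8}{n-1}\lambda(\lambda^2-\mu^2)$ by $(t-t_0)^{2k-2}$ and letting $t\to t_0$ gives $-2k(2k-1)d\ge0$, a contradiction.

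An equivalent repair uses the maximum principle. Set $w=\mu-\lambda\ge0$ and $C=\frac{n+8}{n-1}\lambda(\lambda+\mu)$, which is positive and bounded near $t_0$. Then $w''-Tw'-Cw\le0$ and $w(t_0)=0$. The one-dimensional strong maximum principle, with zeroth-order coefficient $-C\le0$, forces $w\equiv0$ near $t_0$, contradicting nonconstancy. Either argument completes your proof.
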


\begin{proof}
Apply Lemma~\ref{lem:analytic} and pass to the maximal interval $I$. Put $\lambda_*^2=n(n-1)/(n+8)$, where $\lambda_*>0$. We first prove the weak bound $\lambda^2\le\lambda_*^2$.

Suppose that $|\lambda|>\lambda_*$ at some point. The same inequality holds on a neighborhood. Since $\lambda$ is analytic and nonconstant, $\lambda'$ cannot vanish throughout this neighborhood. We may therefore choose a point at which $|\lambda|>\lambda_*$ and $\lambda'\ne0$. Using the sign change and parameter reversal from Section~\ref{sec:nonpositive}, and translating the parameter, we may assume $\lambda(0)>\lambda_*$ and $\lambda'(0)>0$. Equation~\eqref{eq:trace-Cauchy} gives
\begin{equation}\label{eq:sphere-coercivity}
\begin{aligned}
 S-n&\ge\frac{n+8}{n-1}\lambda^2-n\\
    &=\frac{n+8}{n-1}(\lambda^2-\lambda_*^2).
\end{aligned}
\end{equation}
As long as $\lambda>\lambda_*$, the right-hand side is positive. Equation~\eqref{eq:weighted} consequently gives
\[
 \lambda'(t)\prod_{i=1}^{n-1}\rho_i(t)
 \ge\lambda'(0)\prod_{i=1}^{n-1}\rho_i(0)>0.
\]
Thus $\lambda'>0$, and $\lambda$ cannot decrease to $\lambda_*$. These estimates hold throughout the maximal forward interval. By~\eqref{eq:D-positive-bound}, every $\rho_i$ is bounded above by $1$. All the hypotheses of Lemma~\ref{lem:bounded-factors} are therefore satisfied, a contradiction. This shows $\lambda^2\le\lambda_*^2$ on $I$.

We now exclude equality. After changing the sign if necessary, suppose that $\lambda(t_0)=\lambda_*$ at an interior point $t_0\in I$. The weak bound shows that $t_0$ is a maximum point of $\lambda$. Because $\lambda$ is analytic and nonconstant, the first nonzero term of $\lambda-\lambda_*$ at $t_0$ has even order and a negative coefficient. Hence there are an integer $k\ge1$ and a constant $d>0$ such that
\[
 \lambda(t)=\lambda_*-d(t-t_0)^{2k}
             +O\bigl(|t-t_0|^{2k+1}\bigr).
\]
The function $\lambda$ is positive near $t_0$. Combining the scalar equation with~\eqref{eq:sphere-coercivity}, we obtain
\begin{equation}\label{eq:sphere-contact}
 \lambda''-T\lambda'
 \ge\frac{n+8}{n-1}\lambda(\lambda^2-\lambda_*^2).
\end{equation}
The function $T$ is analytic and bounded near $t_0$. Differentiating the convergent Taylor expansion gives
\[
\begin{aligned}
 \lambda''-T\lambda'
 &=-2k(2k-1)d(t-t_0)^{2k-2}
    +O\bigl(|t-t_0|^{2k-1}\bigr),\\
 \frac{n+8}{n-1}\lambda(\lambda^2-\lambda_*^2)
 &=-2nd(t-t_0)^{2k}
    +O\bigl(|t-t_0|^{2k+1}\bigr).
\end{aligned}
\]
For $t>t_0$ sufficiently close to $t_0$, divide~\eqref{eq:sphere-contact} by $(t-t_0)^{2k-2}$ and let $t\to t_0^{+}$. The resulting inequality is $-2k(2k-1)d\ge0$, a contradiction. Thus the equality case is impossible, and~\eqref{eq:lambda-sphere-bound} follows.
\end{proof}

\Needspace{10\baselineskip}
\begin{theorem}\label{thm:sphere}
Let $M^n$ be a connected biharmonic hypersurface in $\SSS^{n+1}$. If the mean curvature is not constant, then
\begin{equation}\label{eq:spherical-global-estimates}
 H^2<\frac{4(n-1)}{n(n+8)}
 \qquad\text{on }M^n.
\end{equation}
If $n\ge5$, then in addition
\begin{equation}\label{eq:scalar-strict-bound}
 \Scal<\frac{n(n^2+10n-20)}{n+8}
 \qquad\text{on }M^n.
\end{equation}
\end{theorem}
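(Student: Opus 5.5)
The plan follows the outline in the introduction. First, on $U\cap M_A$ I would apply Proposition~\ref{prop:positive-ODE}. Around any point of $U\cap M_A$, take the principal frame with $e_n\parallel\nabla H$ and an integral curve of $e_n$. Proposition~\ref{prop:system} gives a solution of \eqref{eq:system}--\eqref{eq:trace-def} with $c=1$, and $\lambda'=-\tfrac n2 e_n(H)\ne0$, so $\lambda$ is nonconstant. Proposition~\ref{prop:positive-ODE} then gives $\lambda^2<n(n-1)/(n+8)$ along the curve, in particular at the chosen point. Since $\lambda=-nH/2$, this is exactly $H^2<4(n-1)/[n(n+8)]$ on $U\cap M_A$. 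The set $M_A$ is dense, so by continuity $H^2\le 4(n-1)/[n(n+8)]$ on $\overline U$.

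Second, I would extend the weak bound to all of $M^n$. On each connected component $W$ of the open set $M\setminus\overline U$ we have $\nabla H=0$, so $H$ is constant on $W$. Since $H$ is not constant on the connected $M$, $W\neq M$, and connectedness forces $\partial W\ne\varnothing$. Moreover $\partial W\subset\overline U$: a boundary point of $W$ lies outside $W$, and if it lay in the open set $M\setminus\overline U$ it would belong to the same component $W$. Taking a boundary point of $W$ and using continuity, the constant value of $H^2$ on $W$ is at most the bound. Hence $H^2\le\beta:=4(n-1)/[n(n+8)]$ everywhere.

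Third, I would upgrade to a strict inequality, which I expect to be the main obstacle. Suppose $H^2(p)=\beta$. Points of $U\cap M_A$ already satisfy the strict bound. I would show that $H$ is constant near $p$ using the normal equation \eqref{eq:biharmonic-normal}, $\Delta H=H(n-S)$, together with J.-H. Chen's estimate $S\ge\frac{n+8}{n-1}\cdot\frac{n^2H^2}{4}$. This estimate holds on $U$ and also trivially where $S\ge nH^2$ and $H^2=\beta$ is attained? Only the first holds in general, so I would try to extend it by the same component argument. Wherever $\nabla H=0$ on an open set, $H$ is constant there. Continuity then shows Chen's estimate holds on $\overline U$, and on the interior of $M\setminus U$ we have $\Delta H=0$, so $H(S-n)=0$ there. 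Choosing the normal so that $H(p)>0$, near $p$ we get $H>0$ and $\Delta H= H(n-S)\le H\bigl(n-\tfrac{n^2(n+8)}{4(n-1)}H^2\bigr)$. Writing $h=\sqrt\beta-H\ge0$ with $h(p)=0$, the right-hand side is $\le C h$, a linear inequality of the form $Lh\ge -Ch$ with $L=-\Delta=\diver\nabla$. Hopf's strong maximum principle, applied to a nonnegative supersolution vanishing at an interior point, gives $h\equiv0$ near $p$. So $H$ is constant on an open set, and the set where $H^2=\beta$ is open and closed. Connectedness then makes $H$ constant, a contradiction. The delicate point is justifying the differential inequality on the whole neighborhood, including points of $\partial U$ and the interior of $M\setminus U$; the component argument above is what I would use for this.

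Finally, for $n\ge5$, the Gauss equation gives $\Scal=n(n-1)+n^2H^2-S$. Combining this with $S\ge\frac{n+8}{n-1}\frac{n^2H^2}{4}$ (valid everywhere by the above extension) gives $\Scal\le n(n-1)-\frac{n^2(12-n)}{4(n-1)}H^2$... For $n\ge 12$ the coefficient of $H^2$ is positive, and inserting the strict bound $H^2<\beta$ yields exactly $n(n-1)+\frac{n(n-12)}{n+8}=\frac{n(n^2+10n-20)}{n+8}$. For $5\le n<12$ the coefficient is negative and this substitution fails. In that range I would instead use the sharper inequality $S\ge nH^2$ together with the eigenvalue structure $\lambda=-nH/2$ to get a bound monotone in $H^2$, and I would check that it again hits the stated constant at $H^2=\beta$. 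This case analysis is a secondary obstacle.
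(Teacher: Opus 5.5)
There is a genuine gap in the scalar-curvature part, caused by an arithmetic slip. From the Gauss equation and Chen's estimate, the coefficient of $H^2$ is
\[
 n^2-\frac{n^2(n+8)}{4(n-1)}=\frac{n^2\bigl(4(n-1)-(n+8)\bigr)}{4(n-1)}=\frac{3n^2(n-4)}{4(n-1)},
\]
not $-n^2(12-n)/[4(n-1)]$. Your own check already fails: $n(n-1)+\frac{n(n-12)}{n+8}=\frac{n(n^2+8n-20)}{n+8}$, which is not the stated constant.

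The correct coefficient is positive exactly when $n\ge5$, which is why the theorem assumes $n\ge5$. For $n=4$ it vanishes, and you only get $\Scal\le n(n-1)$, which equals the threshold. Substituting the strict bound $H^2<\beta$ (your $\beta=4(n-1)/[n(n+8)]$) gives, in one line for every $n\ge5$,
\[
\Scal<n(n-1)+\frac{3n(n-4)}{n+8}=\frac{n(n^2+10n-20)}{n+8}.
\]
This is the paper's argument, and no case split is needed. Your fallback for $5\le n<12$ would not have worked. The inequality $S\ge nH^2$ is weaker than Chen's estimate, not sharper, because $n(n+8)/[4(n-1)]>1$. It only yields $\Scal<n(n-1)+\frac{4(n-1)^2}{n+8}$, which exceeds the target by $\frac{(n+2)^2}{n+8}$.

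The $H^2$ part is correct and differs from the paper only in bookkeeping:
\begin{itemize}
\item You extend the weak bound through components of $M\setminus\overline U$; the paper uses components of $\{H^2>\beta\}$.
\item You prove strictness locally, with $h=\sqrt\beta-H$ and an open/closed argument. The paper first proves Chen's estimate on all of $M$, by a separate component argument on $\{S<\frac{n^2(n+8)}{4(n-1)}H^2\}$, and then applies the strong maximum principle once, globally, to $u=\beta-H^2$.
\end{itemize}

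Two points should be made explicit in your version.
\begin{itemize}
\item On $M\setminus\overline U$ near $p$, you have $\Delta H=0\le H\bigl(n-\frac{n^2(n+8)}{4(n-1)}H^2\bigr)$ because $H>0$ and $H^2\le\beta$ by your second step. The same observation also justifies your later claim that Chen's estimate holds everywhere: off $\overline U$, $H$ is locally constant, so either $H=0$ or $S=n\ge\frac{n^2(n+8)}{4(n-1)}H^2$.
\item The linear inequality should read $\diver\nabla h\le Ch$. As written, ``$Lh\ge-Ch$ with $L=\diver\nabla$'' has the wrong sign, although your description of $h$ as a nonnegative supersolution is right.
\end{itemize}
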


\begin{proof}
Suppose that $H$ is nonconstant and let $U=\{p\in M:\nabla H(p)\ne0\}$. At each point of $U\cap M_A$, Proposition~\ref{prop:system}  can be applied along an integral curve of $e_n$, and from Proposition~\ref{prop:positive-ODE} it follows that
\[
 H^2<\frac{4(n-1)}{n(n+8)}.
\]
Since $M_A$ is dense in $M^n$, the set $U\cap M_A$ is dense in $U$. Hence continuity yields
\begin{equation}\label{eq:weak-bound-U}
 H^2\le\frac{4(n-1)}{n(n+8)}\qquad\text{on }U.
\end{equation}

We next extend this bound to all of $M^n$. Define the open set
\[
 V=\left\{p\in M:H^2(p)>\frac{4(n-1)}{n(n+8)}\right\}.
\]
Suppose that $V\ne\varnothing$, and let $V_0$ be a connected component of $V$. Since $M$ is locally connected, $V_0$ is open in $M$. By~\eqref{eq:weak-bound-U}, $V\cap U=\varnothing$, so $H^2$ is constant on $V_0$. We use the boundary argument for connected components in~\cite{NistorOniciuc2019, BibiLoubeauOniciuc2021}.

If $p_0\in\partial_M V_0$, continuity gives $H^2(p_0)\ge4(n-1)/[n(n+8)]$. If the inequality were strict, there would be a connected neighborhood $B$ of $p_0$ contained in $V$. Since $B\cap V_0\ne\varnothing$, the maximality of the connected component $V_0$ would give $B\subset V_0$, contradicting $p_0\in\partial_M V_0$. Thus $H^2(p_0)=4(n-1)/[n(n+8)]$. On the other hand, $H^2$ is constant and strictly greater than this value on $V_0$, which contradicts continuity at $p_0$. Hence $\partial_M V_0=\varnothing$. Since $M$ is connected, $V_0=M$, contrary to the assumption that $H$ is nonconstant. Therefore
\begin{equation}\label{eq:weak-bound-M}
 H^2\le\frac{4(n-1)}{n(n+8)}\qquad\text{on }M^n.
\end{equation}

We now prove the global consequence of J.-H. Chen's estimate~\cite{ChenJH1993} independently of~\eqref{eq:weak-bound-M}. On $U$, the trace inequality~\eqref{eq:trace-Cauchy} gives
\begin{equation}\label{eq:global-S-estimate}
 S\ge\frac{n^2(n+8)}{4(n-1)}H^2.
\end{equation}
Define the open set
\[
 W=\left\{p\in M:S(p)<\frac{n^2(n+8)}{4(n-1)}H^2(p)\right\}.
\]
Suppose that $W\ne\varnothing$, and let $W_0$ be a connected component of $W$. Since $W\cap U=\varnothing$, $H^2$ is constant on $W_0$. This constant is positive, because $S\ge0$. The normal biharmonic equation therefore gives $S=n$ on $W_0$, and $S-n^2(n+8)H^2/[4(n-1)]$ is a fixed negative constant there. If $p_0\in\partial_M W_0$, continuity implies $p_0\in W$. A connected neighborhood of $p_0$ contained in $W$ and meeting $W_0$ must lie in $W_0$, again contradicting the definition of a boundary point. Thus $\partial_M W_0=\varnothing$. By connectedness, $W_0=M$, contrary to the assumption that $H$ is nonconstant. Hence $W=\varnothing$, and~\eqref{eq:global-S-estimate} holds on all of $M$.

It remains to make~\eqref{eq:weak-bound-M} strict. Set
\[
 u=\frac{4(n-1)}{n(n+8)}-H^2.
\]
Then $u$ is smooth and nonnegative. Since $\Delta=-\diver\nabla$, the normal biharmonic equation gives $\diver(\nabla H)=H(S-n)$. Hence
\begin{equation}\label{eq:strong-minimum}
\begin{aligned}
 \diver(\nabla u)
 &=-2H\diver(\nabla H)-2|\nabla H|^2\\
 &=-2H^2(S-n)-2|\nabla H|^2\\
 &\le-2H^2\left(\frac{n^2(n+8)}{4(n-1)}H^2-n\right)\\
 &=\frac{n^2(n+8)}{2(n-1)}H^2u.
\end{aligned}
\end{equation}
If $u(p)=0$ at some point, then $v=-u\le0$ attains its maximum value $0$ at $p$ and satisfies
\[
 \left(\diver\nabla-\frac{n^2(n+8)}{2(n-1)}H^2\right)v\ge0.
\]
The strong maximum principle implies $u\equiv0$, so $H^2$ is constant, a contradiction. Thus the bound~\eqref{eq:spherical-global-estimates} is strict.

Finally, assume $n\ge5$. It follows from the Gauss equation and~\eqref{eq:global-S-estimate} that
\[
\begin{aligned}
 \Scal
 &=n(n-1)+n^2H^2-S\\
 &\le n(n-1)+\frac{3n^2(n-4)}{4(n-1)}H^2\\
 &<n(n-1)+\frac{3n(n-4)}{n+8}
 =\frac{n(n^2+10n-20)}{n+8},
\end{aligned}
\]
where the strict inequality follows from~\eqref{eq:spherical-global-estimates}. This yields~\eqref{eq:scalar-strict-bound}.
\end{proof}

\begin{proof}[Proof of Theorem~\ref{cor:one-point}]
If $H$ is nonconstant, then Theorem~\ref{thm:sphere} would give
\[
 H^2<\frac{4(n-1)}{n(n+8)}
\]
at every point of $M$, contradicting the hypothesis. Hence $H$ is constant.
\end{proof}

\begin{proof}[Proof of Corollary~\ref{cor:scalar-point}]
If $H$ is nonconstant, Theorem~\ref{thm:sphere} would give the strict bound~\eqref{eq:scalar-strict-bound} at every point of $M$, contradicting the hypothesis. Hence $H$ is constant.
\end{proof}

We recall the constant mean curvature gap in~\cite{BO2012}. A connected proper biharmonic CMC hypersurface in $\SSS^{n+1}$ satisfies $S=n$ and $0<H^2\le1$. Moreover, for $n\ge3$,
\begin{equation}\label{eq:CMC-gap}
 0<|H|\le\frac{n-2}{n}\quad\text{or}\quad |H|=1.
\end{equation}
Equality $H^2=1$ characterizes the small hypersphere $\SSS^n(1/\sqrt2)$, while $|H|=(n-2)/n$ holds precisely for the local product~\eqref{eq:Clifford-one}.

\begin{proof}[Proof of Theorem~\ref{thm:upper-classification}]
For $n\ge4$, we have
\[
 \frac{(n-2)^2}{n^2}-\frac{4(n-1)}{n(n+8)}
 =\frac{(n-4)(n^2+4n-8)}{n^2(n+8)}\ge0.
\]
Under either hypothesis, we deduce from Theorem~\ref{cor:one-point} that the mean curvature is  nonzero constant. The gap~\eqref{eq:CMC-gap} and its equality case give the two conclusions.
\end{proof}

The mean curvature threshold in Theorem~\ref{thm:sphere} differs from the endpoint of the classical gap~\eqref{eq:CMC-gap}. The following result describes the consequences of equality at the former threshold.

\begin{proposition}\label{prop:threshold-equality}
Let $M^n$ be a biharmonic hypersurface in $\SSS^{n+1}$. If $H^2(p)=4(n-1)/[n(n+8)]$ at one point, then
\begin{equation}\label{eq:threshold-constant}
 H^2\equiv\frac{4(n-1)}{n(n+8)},\qquad S\equiv n.
\end{equation}
For $n=4$, this equality holds if and only if $M$ is locally congruent to an open part of $\SSS^1(1/\sqrt2)\times\SSS^3(1/\sqrt2)\subset\SSS^5$. For $n\ge5$, equality in the scalar-curvature threshold, $\Scal(p)=n(n^2+10n-20)/(n+8)$, is equivalent to~\eqref{eq:threshold-constant}.
\end{proposition}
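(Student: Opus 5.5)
The plan is to reduce everything to Theorem~\ref{cor:one-point} and then use the normal biharmonic equation together with the CMC classification. If $H^2(p)=4(n-1)/[n(n+8)]$ at one point, Theorem~\ref{cor:one-point} (which includes equality) shows that $H$ is constant on the connected hypersurface. Hence $H^2\equiv4(n-1)/[n(n+8)]$, which is nonzero. The normal equation \eqref{eq:biharmonic-normal} with $c=1$ then reads $H(S-n)=0$, so $S\equiv n$, giving \eqref{eq:threshold-constant}. The converse direction in the statement is trivial.

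For $n=4$, note that $4(n-1)/[n(n+8)]=12/48=1/4=((n-2)/n)^2$. This is exactly the equality endpoint of the gap \eqref{eq:CMC-gap}. Theorem~\ref{thm:upper-classification} (equivalently, the equality case in \cite{BO2012}) then gives local congruence to $\SSS^1(1/\sqrt2)\times\SSS^3(1/\sqrt2)$. Conversely, that product is biharmonic with $|H|=(n-2)/n=1/2$, so $H^2=1/4$ and the threshold is attained.

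For $n\ge5$, assume $\Scal(p)=n(n^2+10n-20)/(n+8)$. First, Corollary~\ref{cor:scalar-point} (whose hypothesis includes equality) gives that $H$ is constant. The step to watch is that $H$ might be zero here, so I handle the two cases separately.
\begin{itemize}
\item If $H\equiv0$, then $\Scal=n(n-1)-S\le n(n-1)$. Since $n(n^2+10n-20)/(n+8)-n(n-1)=3n(n-4)/(n+8)>0$, the threshold cannot be reached, so this case is excluded.
\item If $H\ne0$, the normal equation gives $S=n$. The Gauss equation then yields $\Scal=n(n-1)+n^2H^2-n=n(n-2)+n^2H^2$. Setting this equal to the threshold gives $n^2H^2=n(n^2+10n-20)/(n+8)-n(n-2)=4n(n-1)/(n+8)$, that is, $H^2=4(n-1)/[n(n+8)]$. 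This gives \eqref{eq:threshold-constant}.
\end{itemize}
For the converse, \eqref{eq:threshold-constant} substituted into the same Gauss identity returns exactly the threshold value of $\Scal$ at every point.

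No step is a serious obstacle. The only points needing care are excluding the minimal case in the scalar-curvature direction and checking the algebraic identity $4(n-1)/[n(n+8)]=((n-2)/n)^2$ at $n=4$.
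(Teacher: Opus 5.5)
Your proposal is correct and follows essentially the same route as the paper. For \eqref{eq:threshold-constant} you use Theorem~\ref{cor:one-point} plus the normal equation. For $n=4$ you use the coincidence $4(n-1)/[n(n+8)]=((n-2)/n)^2$ together with the equality case of \eqref{eq:CMC-gap} (equivalently Theorem~\ref{thm:upper-classification}). For $n\ge5$ you use Corollary~\ref{cor:scalar-point}, rule out the minimal case via $\Scal\le n(n-1)$, and conclude with the Gauss equation. Your explicit computations of $3n(n-4)/(n+8)$ and $n^2H^2=4n(n-1)/(n+8)$ just fill in arithmetic that the paper leaves implicit.
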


\begin{proof}
Note that Theorem~\ref{cor:one-point} gives constant nonzero $H$, and the normal equation gives $S=n$.

For $n=4$, the threshold is $1/4=((n-2)/n)^2$, and the equality case of~\eqref{eq:CMC-gap} gives the Clifford torus product. Conversely, that product has principal curvatures $-1,1,1,1$ up to simultaneous change of sign, and hence $H^2=1/4$.

For $n\ge5$, equality in the scalar-curvature threshold gives constant mean curvature by Corollary~\ref{cor:scalar-point}. The mean curvature cannot vanish, since for a minimal hypersurface $\Scal=n(n-1)-S\le n(n-1)$, strictly below the threshold. Thus $S=n$, and the Gauss equation gives
\[
\begin{aligned}
 n(n-2)+n^2H^2
 &=\frac{n(n^2+10n-20)}{n+8},\\
 H^2&=\frac{4(n-1)}{n(n+8)}.
\end{aligned}
\]
The converse follows from the same equation.
\end{proof}

For $n\ge5$, Proposition~\ref{prop:threshold-equality} reduces equality at the mean-curvature threshold to the CMC system~\eqref{eq:threshold-constant}; no further local classification follows from the present argument. The equality condition for J.-H. Chen's estimate~\eqref{eq:global-S-estimate} is different. At a point where $\nabla H\ne0$, equality in the trace estimate holds exactly when
\begin{equation}\label{eq:norm-equality-spectrum}
\begin{aligned}
 \lambda_n&=-\frac n2H,\\
 \lambda_1=\cdots=\lambda_{n-1}&=\frac{3n}{2(n-1)}H.
\end{aligned}
\end{equation}
The following proposition describes the equality set when the mean curvature is nonconstant.

\begin{proposition}\label{prop:norm-equality}
Let $M^n$ be a connected biharmonic hypersurface in $\SSS^{n+1}$ with nonconstant mean curvature. Then
\[
 S>\frac{n^2(n+8)}{4(n-1)}H^2
\]
holds on $U\cap M_A$. Moreover, the equality set of~\eqref{eq:global-S-estimate} has empty interior. Consequently, the strict inequality holds on an open dense subset of $M^n$.
\end{proposition}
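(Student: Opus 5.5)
The plan has three parts: prove the strict inequality on $U\cap M_A$ by reducing equality to the case of two distinct principal curvatures; show that no open set of equality meets $U$; and treat open sets of equality disjoint from $U$ using Theorem~\ref{thm:sphere} and unique continuation. Density of the strict set then follows formally.

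\emph{Strict inequality on $U\cap M_A$.} At a point of $U\cap M_A$, equality in~\eqref{eq:global-S-estimate} is equality in~\eqref{eq:trace-Cauchy}. By~\eqref{eq:norm-equality-spectrum} this means $\lambda_1=\cdots=\lambda_{n-1}=-3\lambda/(n-1)$. By Lemma~\ref{lem:frame}, $\lambda$ is simple, so at such a point $M$ has exactly two distinct principal curvatures. On $M_A$ the number of distinct principal curvatures is locally constant, so on a neighborhood in $U\cap M_A$ we again have exactly two. Since $\lambda$ is simple there, all $\lambda_i$ coincide there; call the common value $\mu$. Then $\lambda_i'=(\mu-\lambda)a_i$ is independent of $i$ and $\mu\ne\lambda$, which forces $a_1=\cdots=a_{n-1}=:a$.

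The system~\eqref{eq:system} then collapses to a symmetric system in $(\mu,a)$, with $\lambda=-(n-1)\mu/3$:
\[
\mu'=(\mu-\lambda)a,\qquad a'=a^2+\lambda\mu+1,
\]
together with the scalar constraint $\mathcal P=0$. I would first try to quote the known result that biharmonic hypersurfaces in $\SSS^{n+1}$ with at most two distinct principal curvatures have constant mean curvature (Balmu\c{s}--Montaldo--Oniciuc). Failing that, I would argue directly in the symmetric case. There $G_{1,2}=(n-1)\mu a^2$, $F_2=(n-1)a^2$, $T=(n-1)a$ and $H_2=(n-1)\mu^2$. Substituting these into~\eqref{eq:P-definition} turns $\mathcal P$ into $\lambda$ times an explicit quadratic in $a$ and $\mu$. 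Differentiating this relation along the flow and eliminating $a$ should give a nontrivial polynomial equation in $\mu$ alone, so $\mu$, and hence $\lambda$, is constant. This contradicts $\lambda'\ne0$ on $U$. This elimination is the step I expect to be the main computational obstacle. Either route gives strict inequality at every point of $U\cap M_A$.

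\emph{Empty interior.} Let $E$ be the equality set of~\eqref{eq:global-S-estimate} and suppose its interior $E^\circ$ is nonempty. If $E^\circ\cap U\neq\varnothing$, then $E^\circ\cap U$ is a nonempty open set, so it meets the dense set $M_A$. This contradicts the previous step. Hence $E^\circ\subset M\setminus U$, and $H$ is constant on each connected component $E_0$ of $E^\circ$.

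If that constant is nonzero, the normal equation~\eqref{eq:biharmonic-normal} gives $S=n$. Equality in~\eqref{eq:global-S-estimate} then gives $H^2=4(n-1)/[n(n+8)]$ on $E_0$, which contradicts the strict bound~\eqref{eq:spherical-global-estimates} of Theorem~\ref{thm:sphere}.

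If the constant is $0$, equality gives $S=0$, so $E_0$ is totally geodesic and coincides with an open part of a great hypersphere. Both $\phi$ and the totally geodesic embedding are biharmonic and agree on an open set. The unique continuation theorem of~\cite{BrandingOniciuc2019} then forces $\phi$ to be totally geodesic on the connected $M$, so $H\equiv0$, contradicting the assumption that $H$ is nonconstant.

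\emph{Open dense strict set.} By the previous step $E$ is closed with empty interior. Therefore its complement, which is exactly the set where the strict inequality holds, is open and dense in $M^n$.
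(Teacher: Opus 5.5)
Your proposal is correct and follows the paper's proof essentially step by step: equality at a point of $U\cap M_A$ forces exactly two distinct principal curvatures on a neighborhood, which contradicts the constant-mean-curvature theorem of \cite{BMO2008}; an open equality set must avoid $U$; a nonzero constant $H$ there contradicts Theorem~\ref{thm:sphere}; and $H=0$ is excluded by \cite{BrandingOniciuc2019}. The one imprecision is in the last case: the totally geodesic embedding is not a map defined on $M$, so you cannot compare it with $\phi$ as two biharmonic maps on the same domain. Instead, use the form of unique continuation the paper cites (a biharmonic map that is harmonic on an open set is harmonic everywhere), which gives $H\equiv0$ directly, or compare $\phi$ with $R\circ\phi$, where $R$ is the reflection across the great hypersphere containing $\phi(E_0)$.
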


\begin{proof}
Suppose that equality holds at a point of $U\cap M_A$. If $H=0$, then equality gives $S=0$, so every principal curvature vanishes, contradicting the fact that the distinguished eigenvalue $\lambda_n=-nH/2$ is simple on $U\cap M_A$. Thus $H\ne0$. Equality in~\eqref{eq:trace-Cauchy} yields~\eqref{eq:norm-equality-spectrum}. Since the number of distinct principal curvatures is locally constant on $M_A$, the hypersurface has exactly two distinct principal curvatures on a neighborhood of the point. The constant-mean-curvature theorem for this case~\cite{BMO2008} then contradicts $\nabla H\ne0$. This gives the first assertion.

Suppose next that equality holds on a nonempty open set $V$. If $\nabla H\ne0$ somewhere in $V$, then $V\cap U$ contains a nonempty open subset. Since $M_A$ is dense, this subset meets $M_A$, contradicting the first assertion. Hence $H$ is locally constant on $V$. On a connected local normal neighborhood contained in $V$, if the constant value of $H$ is nonzero, the normal equation gives $S=n$, and equality yields
\[
 H^2=\frac{4(n-1)}{n(n+8)},
\]
contrary to Theorem~\ref{thm:sphere}, because $M$ is assumed to have nonconstant mean curvature. If $H=0$ on that neighborhood, the immersion is harmonic there; the unique continuation theorem of Branding and Oniciuc~\cite[Theorem~1.3]{BrandingOniciuc2019} then makes it harmonic on all of $M^n$, again a contradiction. Thus the equality set has empty interior. Since it is closed by continuity, its complement is open and dense.
\end{proof}

In particular, if $S=n^2(n+8)H^2/[4(n-1)]$ holds identically on a connected biharmonic hypersurface, then either $H=0$ and $S=0$, so the hypersurface is totally geodesic, or~\eqref{eq:threshold-constant} holds. No local classification is asserted at an isolated critical point of $H$, or at a point where the principal-curvature multiplicities change.

A constant value of $S$ gives a further application of the continuation estimate.

\begin{proposition}\label{prop:constant-S}
Let $M^n$ be a connected biharmonic hypersurface in $\SSS^{n+1}$. If $S$ is constant and $S\ge n$, then $M$ has constant mean curvature. Moreover, either $M$ is minimal or $S=n$.
\end{proposition}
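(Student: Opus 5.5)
Suppose for contradiction that $H$ is nonconstant, so $U\ne\varnothing$ and $U\cap M_A\ne\varnothing$. Around a point of $U\cap M_A$, Proposition~\ref{prop:system} with $c=1$ gives a solution of~\eqref{eq:system}--\eqref{eq:trace-def} along an integral curve of $e_n$ with $\lambda'\ne0$. The aim is to show that constancy of $S$ with $S\ge n$ is incompatible with a nonconstant $\lambda$. I would first pass to the polynomial system~\eqref{eq:polynomial} and its maximal interval $I$ via Lemma~\ref{lem:analytic}. Since $S$ is the polynomial $\lambda^2+\sum\lambda_i^2$ in the variables $\lambda_i$, it is analytic on $I$ and equals the constant $S_0$ on the original interval. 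By the identity theorem, $S\equiv S_0\ge n$ on all of $I$.

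The key step treats the case $S_0>n$. Equation~\eqref{eq:weighted} then shows that $\lambda'\prod\rho_i$ has the sign of $\lambda$ and is monotone while $\lambda>0$. Using the sign change and parameter reversal from Section~\ref{sec:nonpositive}, I arrange $\lambda(0)>0$ and $\lambda'(0)>0$. This choice is available: if $\lambda'$ vanished at every point where $\lambda\ne0$, then $\lambda$ would be constant. Exactly as in Proposition~\ref{prop:nonpositive-ODE}, $\lambda'\prod\rho_i\ge\delta>0$ and $\lambda$ is increasing and positive on $[0,t_*)$. Moreover $\rho_i\le1$ by~\eqref{eq:D-positive-bound}. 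Lemma~\ref{lem:bounded-factors} then yields a contradiction.

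The delicate case is $S_0=n$. Here the right side of~\eqref{eq:weighted} vanishes identically, so $\lambda'\prod\rho_i$ is a nonzero constant $\delta$ after normalization. The hypotheses of Lemma~\ref{lem:bounded-factors} require $\lambda>0$ and $\lambda'>0$ on a forward interval. Since $\lambda'$ never changes sign, $\lambda$ is strictly monotone, so after the symmetries $\lambda'>0$ everywhere. If $\lambda>0$ at some point, I apply Lemma~\ref{lem:bounded-factors} forward from there. Otherwise $\lambda<0$ on all of $I$. In that case I reverse the parameter and flip signs: the map $t\mapsto -t$ combined with $\lambda\mapsto-\lambda$ preserves $\lambda'>0$ and turns $\lambda<0$ into $\lambda>0$, so the lemma applies again.

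I expect the main obstacle to be confirming that the proof of Lemma~\ref{lem:bounded-factors} uses the scalar equation only through the hypotheses~\eqref{eq:bounded-hypotheses}. A quick reading suggests it does: the proof relies on~\eqref{eq:product-monotonicity}, the trace relation and the continuation theorem. In either subcase we therefore get a contradiction, so $\lambda$, and hence $H$, is constant on $U\cap M_A$. This contradicts $\nabla H\ne0$ there, so $H$ is constant on $M$.

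Finally, with $H$ constant, the normal equation~\eqref{eq:biharmonic-normal} gives $H(S-n)=0$. Hence either $H=0$, so $M$ is minimal, or $S=n$.
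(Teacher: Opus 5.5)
Your proposal is correct and follows essentially the same route as the paper's proof. You continue $S\equiv S_0$ analytically via Lemma~\ref{lem:analytic}, normalize to $\lambda(0)>0$ and $\lambda'(0)>0$, use \eqref{eq:weighted} together with $\rho_i\le1$ to contradict Lemma~\ref{lem:bounded-factors}, and then read off $H(S_0-n)=0$. Your separate treatment of $S_0=n$ is valid but unnecessary, because the paper's single argument only needs $S_0-n\ge0$ for the monotonicity of $\lambda'\prod_i\rho_i$, and that already covers the equality case.
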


\begin{proof}
Suppose that $H$ is nonconstant. Choose an integral curve of a local principal field $e_n$ through a point of $U\cap M_A$, and apply Lemma~\ref{lem:analytic}. Let $S_0\ge n$ be the prescribed constant. The function $S-S_0$ is analytic along the extended polynomial solution and vanishes on the original interval. It therefore vanishes throughout the maximal interval $I$.

Let us apply the sign change and parameter reversal used in Section~\ref{sec:nonpositive} to arrange $\lambda(0)>0$ and $\lambda'(0)>0$. As long as $\lambda>0$, equation~\eqref{eq:weighted} gives
\[
 \left(\lambda'\prod_{i=1}^{n-1}\rho_i\right)'
 =\lambda(S_0-n)\prod_{i=1}^{n-1}\rho_i\ge0.
\]
Hence $\lambda'\prod_i\rho_i\ge\lambda'(0)\prod_i\rho_i(0)>0$. It follows that $\lambda'>0$ and $\lambda\ge\lambda(0)>0$ on the entire maximal forward interval. Since $\rho_i\le1$, Lemma~\ref{lem:bounded-factors} gives a contradiction. Thus $H$ is constant. The normal equation then reads $H(S_0-n)=0$, so $S_0=n$ when $M$ is nonminimal.
\end{proof}

The equation $S=S_0$ is essential here and an inequality $S\ge n$ on the initial geometric interval need not persist under analytic continuation.

Combining the one-point criterion with existing gap theorems also gives further complete classification statements. For complete hypersurfaces, Theorem \ref{cor:one-point} can also be combined with Nistor’s gap theorem \cite{Nistor2023}, provided that the mean curvature satisfies the pinching assumptions in that theorem. The one-point criterion supplies the CMC condition whenever the corresponding lower bound is at least $\sqrt{4(n-1)/[n(n+8)]}$.

\begin{remark}
The continuation method leaves open the regime in which $S-n$ changes sign along the analytically continued solution. The estimates above do not rule out this possibility. The classification of all proper CMC biharmonic hypersurfaces in spheres is a separate problem.
\end{remark}

\noindent{\bf Acknowledgement:} {The authors are very grateful to Professor Cezar Oniciuc for his valuable suggestions and comments, especially for pointing out the result of J.-H. Chen. The work was partially supported by NSFC (No.12571051).}\\

\noindent{\bf AI Disclosure:}{
The authors used ChatGPT to assist with language polishing and manuscript preparation. All mathematical arguments and proofs were independently developed  by the authors, who take full responsibility for the content of the manuscript.
}

\end{document}